\newtheorem{thm}{Theorem}[section]
\newtheorem{lem}{Lemma}[section]
\newtheorem{remark}{Remark}[section]
\newtheorem{clm}{Claim}[section]
\begin{document}
\title{Signless Laplacian spectral conditions: Forbidden $4$-cycle and star embeddings\footnote{Supported by the National Natural Science Foundation of 
China (Nos. \! 12271362 \! 11671344).}}
\author{{\bf Zhe Wei}, {\bf  Zhenzhen Lou}\thanks{Corresponding author. 
E-mail addresses: xjdxlzz@163.com (Z. Lou), changxianghe@hotmail.com (C. He).},
{\bf Changxiang He}
\\
{\footnotesize  College of Science, University of Shanghai for Science and Technology,
 Shanghai, 200093, P.R. China}}

\date{}
\maketitle
\vspace{-0.4cm}
\begin{center}
Dedicated to Professor Eddie Cheng on the occasion of his 60th birthday.
\end{center}
{\flushleft\large\bf Abstract}
The signless Laplacian spectral radius has emerged as a crucial spectral parameter in network science.
This paper establishes new extremal results in spectral graph theory by investigating the signless Laplacian spectral radius ($Q$-index) of graphs with forbidden subgraphs. We present a $Q$-spectral analog of classical Nosal-type theorems, providing sharp conditions that guarantee the existence of either a $4$-cycle or a large star $K_{1,m-k}$ in a graph. 
The main theorem states that for integers $k \geq 0$ and graphs $G$ with size $m \geq \max\{7k+31, k^2+8(k+1)\}$, 
if $q(G) \geq q(S^+_{m,k+1})$, then $G$ must contain a $4$-cycle or $K_{1,m-k}$, unless $G$ is isomorphic to the extremal graph $S^+_{m,k+1}$ formed by adding $k+1$ independent edges to the star $K_{1,m-k-1}$. This result refines previous work on star embeddings by Wang and Guo [Journal of Algebraic Combinatorics, 59 (2024) 213--224], and completes the $Q$-spectral counterpart to Wang's adjacency spectral theorem for $4$-cycle containment [Discrete Math.,  345 (2022) 112973]. Our analysis reveals new insights into how signless Laplacian eigenvalues encode graph structure, with tight bounds demonstrated through explicit extremal graph constructions and asymptotic analysis.

\begin{flushleft}
\textbf{Keywords:} spectral radius; signless Laplacian spectral radius; maximum degree;  quadrilateral
\end{flushleft}
\textbf{AMS subject classifications:} 05C50 05C35
\section{Introduction}\label{se-1}

Spectral graph theory, which investigates the interplay between graph properties and the eigenvalues of associated matrices, has been a central topic 
in algebraic combinatorics and discrete mathematics. Given a simple undirected graph $G = (V(G), E(G))$ with $n$ vertices, its \emph{adjacency matrix} 
$A(G)$ is defined as the $n \times n$ matrix where $a_{ij} = 1$ if vertices $v_i$ and $v_j$ are adjacent, and $0$ otherwise. 
The \emph{spectral radius} $\rho(G)$ of $G$ is the largest eigenvalue of $A(G)$. Another fundamental matrix associated with $G$ is the \emph{signless Laplacian matrix} 
$Q(G) = D(G) + A(G)$, where $D(G)$ is the diagonal degree matrix. The largest eigenvalue $q(G)$ of $Q(G)$, known as the 
\emph{signless Laplacian spectral radius} (or $Q$-index), has emerged as a powerful tool in spectral graph theory due to its ability to 
capture both degree and adjacency information.

The signless Laplacian spectrum, particularly its spectral radius (the largest eigenvalue), plays a pivotal role in network science. In community detection, spectral clustering utilizing signless Laplacian eigenvalues (\cite{Cvetkovic-4,Cvetkovic-5}) identifies densely connected modules, where the spectral radius determines community separation quality. The eigenvalue distribution provides insights into network robustness (\cite{Jamakovic-1,Van-1}), with the spectral radius indicating resilience against failures and attacks. The magnitude of eigenvalues correlates with connectivity strength, making it essential for assessing topological stability (\cite{Das-1,De Lima-1}). The spectral radius also governs dynamical processes-synchronization in oscillator networks, epidemic spreading rates, and control energy requirements. These applications show how it bridges structural properties with functional behaviors across networks.
Recent work shows deep connections between signless Laplacian spectral radius and extremal graph theory. A key focus is how spectral radius bounds in $H$-free graphs influence network design, where forbidden substructures enable desirable properties like sparsity and localized connectivity.

A central theme in extremal graph theory is the study of \emph{$H$-free graphs} that do not contain a given subgraph $H$. Understanding how forbidden substructures influence spectral bounds has led to deep connections between graph structure and spectral properties. This line of research was initiated 
by Nosal's classical result \cite{Nosal-3}, which established that every triangle-free ($K_3$-free) graph $G$ satisfies 
$\rho(G) \leq \sqrt{e(G)}$, where $e(G)$ denotes the number of edges. This bound was later refined and extended in various directions, most notably by \cite{Nikiforov-3,Nikiforov-2,Nikiforov-1,Nikiforov-4}, who systematically investigated spectral conditions forcing the existence of cycles and other subgraphs. In particular, \cite{Nikiforov-4} determined the maximum spectral radius of $C_4$-free graphs, resolving a key problem in spectral extremal theory. Further progress was made by \cite{Zhai-2}, who confirmed a conjecture on the spectral radius of $C_4$-free graphs, providing tight bounds for 
such graphs.

Recent work by \cite{Wang-1,Wang-2} generalized these classical results, linking spectral radius to forbidden cycles and star embeddings. For instance, Theorem \ref{thm:wang} below characterizes graphs with large spectral radius in terms of $C_4$-containment.

\begin{thm}[\cite{Wang-2}]\label{thm:wang}
Let $G$ be a graph of size $m \geq 27$. If $\rho(G) \geq \sqrt{m-1}$, then $G$ contains a $C_4$, unless $G$ is one of the following graphs (possibly 
with isolated vertices): $K_{1,m}$, $K_{1,m-1+e}$, $K^e_{1,m-1}$, or $K_{1,m-1} \cup P_2$.
\end{thm}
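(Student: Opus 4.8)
The plan is to argue by contradiction against a $C_4$-free graph meeting the spectral bound, using the Perron eigenvector of the adjacency matrix together with the two structural consequences of $C_4$-freeness: any two vertices have at most one common neighbour, and for any vertex $u$ the neighbourhood $N(u)$ induces a matching (a path $v_1 v v_2$ inside $N(u)$ would close the $4$-cycle $u v_1 v v_2$). Isolated vertices change neither $m$ nor $\rho(G)$, and $\rho(G)$ is attained on a single connected component $G_0$ (with $m_0 \le m$ edges), so I first reduce to analysing $G_0$ and record at the end the at most one extra edge that may live outside it.

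First I would fix a Perron vector $x$ of $G_0$, scaled so that $x_u = \max_v x_v = 1$ at a vertex $u$. Evaluating the eigen-equation at $u$ gives $\rho = \sum_{v \in N(u)} x_v \le d(u)$, hence $d(u) \ge \rho \ge \sqrt{m-1}$. Applying the operator twice and using that each $w \ne u$ has at most one common neighbour with $u$ yields $\rho^2 = d(u) + \sum_{w \in S} x_w$, where $S$ is the set of vertices joined to $u$ by a path of length two. Bounding $x_w \le 1$, splitting $S$ into the matched vertices inside $N(u)$ and the genuine distance-two vertices, and comparing with the edge partition $m_0 = d(u) + e(N(u)) + e(N(u),V') + e(V')$ (where $V' = V(G_0)\setminus N[u]$), I obtain the key inequality
\[
\rho^2 \;\le\; m_0 + e(N(u)) - e(V').
\]
Since $\rho^2 \ge m-1 \ge m_0 - 1$, this forces $e(V') \le e(N(u)) + 1$ and shows the bound is tight precisely for the star and its small perturbations.

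The substantive part is the stability analysis that upgrades this inequality to an exact classification. Equality in $x_w \le 1$ forces the distance-two vertices carrying positive weight to be Perron-maximal, while the eigen-equation $\rho x_w = \sum_{t \sim w} x_t$ bounds such entries by $d(w)/\rho = O(1/\sqrt m)$; reconciling these two facts (using $m \ge 27$ to make the slack negligible) forces the distance-two contribution to $\sum_{w \in S} x_w$ to be essentially absent, so that $d(u) \ge m - O(1)$ and $u$ is a dominating vertex of a near-star. I would then show $e(N(u)) \le 1$ by a direct weight computation: two disjoint edges in $N(u)$ (two triangles at $u$) would pull a second unit of eigenvector mass off the leaves and strictly lower $\rho$ below $\sqrt{m-1}$ for $m \ge 27$. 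With $e(N(u)) \le 1$ and $e(V') \le e(N(u)) + 1$, only a bounded number of configurations of the leftover edges remain, and checking each pins down $G_0$ as $K_{1,m}$, the triangle-augmented star $K_{1,m-1+e}$, or the pendant-path star $K^e_{1,m-1}$, with the boundary value $\rho = \sqrt{m-1}$ additionally allowing the disconnected $K_{1,m-1}\cup P_2$.

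The main obstacle I anticipate is exactly this stability step: the soft inequality alone permits $e(N(u))$ and $e(V')$ to grow with $d(u)$, so ruling out multi-triangle and multiple-pendant configurations --- and treating the exact equality case $\rho = \sqrt{m-1}$, where the disconnected extremal graph appears --- demands quantitative control of the eigenvector entries of vertices at distance $\ge 2$ from $u$ rather than the crude bound $x_w \le 1$. The threshold $m \ge 27$ enters precisely to guarantee that these error terms are smaller than the $O(1)$ gaps separating the four extremal graphs from all competitors.
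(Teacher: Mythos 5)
Your attempt cannot be checked against the paper's own argument, because the paper has none: Theorem \ref{thm:wang} is quoted as background from Wang \cite{Wang-2}, so the only meaningful comparison is with the proof in that reference. Your setup does match the standard (and Wang's) approach: Perron vector normalized so $x_u=1$ at a maximum entry, the $C_4$-free fact that any $w\neq u$ has at most one common neighbour with $u$, giving $\rho^2=d(u)+\sum_{w\in S}x_w$, and the edge partition $m_0=d(u)+e(N(u))+e(N(u),V')+e(V')$, which yields your key inequality $\rho^2\le m_0+e(N(u))-e(V')$. Up to that point the proposal is correct.

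The genuine gap is the stability step, which you correctly identify as the crux but then do not carry out, and the one quantitative tool you offer for it is unsound. You claim a distance-two vertex $w$ satisfies $x_w\le d(w)/\rho=O(1/\sqrt{m})$; that is only true if $d(w)=O(1)$, which you never establish, and it can fail: a $C_4$-free graph may have a distance-two vertex of degree comparable to $\sqrt{m}$ (e.g.\ a second large star hung off a leaf of the first), making $d(w)/\rho$ of order one --- precisely the competitor configurations the stability argument must eliminate. Likewise, ``two disjoint edges in $N(u)$ would pull a second unit of eigenvector mass off the leaves and strictly lower $\rho$ below $\sqrt{m-1}$'' is the desired conclusion restated, not a proof: matched vertices of $N(u)$ can also have many neighbours in $V'$, so their entries are not controlled by comparison with the clean graph $S^+_{m,2}$, and your soft inequality $e(V')\le e(N(u))+1$ still permits $e(N(u))$ to grow like $d(u)/2$. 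Two further holes: the opening reduction ``at most one extra edge outside $G_0$'' silently uses $\rho(G_0)\le\sqrt{m_0}$ for $C_4$-free graphs (the Zhai--Wang bound \cite{Zhai-2}), which does not follow from your key inequality alone (that would require $e(N(u))\le e(V')$), so it must be cited or proved; and an $O(\cdot)$-style analysis with unspecified constants cannot be closed at the concrete threshold $m\ge 27$ --- the actual proof needs explicit inequalities bounding $\sum_{v}x_v$ over matched and distance-two vertices against $e(N(u))$, $e(N(u),V')$ and $e(V')$ via iterated eigenvalue equations, because the slack at $m=27$ is a small explicit constant, not an asymptotic one.
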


Recently, Wang and Guo \cite{Wang-2} established a general result in terms of the value of $k$.
\begin{thm}\label{Wang-11}(Wang \cite{Wang-1}, Theorem 4) 
Let $k\ge 0$ be an integer and $G$ be a graph of size $m\geq\max \{(k^2+2k+2)^2+k+1, (2k+3)^2+k+1 \}$. If $\rho(G)\ge \sqrt{m-k}$, then $K_{1,m-k}\subseteq G$ or $C_4\subseteq G$.
\end{thm}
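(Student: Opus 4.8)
The plan is to argue by contradiction: assume $G$ has size $m$ as in the hypothesis, $\rho:=\rho(G)\ge\sqrt{m-k}$, and yet $G$ contains neither a $C_4$ nor a $K_{1,m-k}$. The second assumption means $\Delta(G)\le m-k-1$, and the goal is to contradict it by proving $\rho<\sqrt{m-k}$. Let $\mathbf{x}$ be the Perron eigenvector of $A(G)$, scaled so that $\max_v x_v=x_u=1$ for some vertex $u$, and set $d:=d(u)$. Reading the eigenequation at $u$ gives $\rho=\rho x_u=\sum_{v\sim u}x_v\le d$, hence $d\ge\rho\ge\sqrt{m-k}$; thus $u$ is already forced to have large degree, and the whole argument lives in the narrow band $\sqrt{m-k}\le d\le m-k-1$.

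Next I would extract the two structural consequences of $C_4$-freeness that drive everything. First, any two distinct vertices have at most one common neighbour; second, $N(u)$ induces a matching, since a vertex of $N(u)$ with two neighbours inside $N(u)$ would close a $4$-cycle through $u$. Expanding
\[
\rho^2=\rho^2x_u=\sum_{v\sim u}\sum_{w\sim v}x_w
\]
and grouping the terms by the position of $w$ yields the identity $\rho^2=d+\sum_{w\ne u}c_w\,x_w$, where $c_w=|N(u)\cap N(w)|$. By the first fact $c_w\le 1$ for every $w\ne u$, so writing $W=\{w\ne u:\ c_w=1\}$ I obtain the master estimate $\rho^2=d+\sum_{w\in W}x_w$. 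Since $d\le m-k-1$, it suffices to prove the second-order mass bound $\sum_{w\in W}x_w<(m-k)-d$.

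Everything now reduces to bounding $\sum_{w\in W}x_w$, and here the naive estimate $x_w\le1$ is fatally lossy: it would only give $\sum_{w\in W}x_w\le|W|$, which can exceed the affordable gap $(m-k)-d$. Instead I would exploit that entries away from $u$ are small. The eigenequation gives $x_w\le d(w)/\rho$ for every $w\ne u$, so $\sum_{w\in W}x_w\le\rho^{-1}\sum_{w\in W}d(w)$, and $\sum_{w\in W}d(w)$ is controlled by the number $f:=m-d$ of edges not incident to $u$, together with the contribution of the matched vertices of $N(u)$. Using $\rho\ge\sqrt{m-k}$ this converts the whole second-order mass into a quantity of order $f/\sqrt{m-k}$ away from the star, comfortably below $(m-k)-d=f-k$ once $m$ is polynomially large in $k$. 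A K\H{o}v\'{a}ri--S\'{o}s--Tur\'{a}n (Reiman) bound on the number of edges of a $C_4$-free graph, applied to the subgraph on $N(u)\cup N_2(u)$, is what caps $\sum_{w\in W}d(w)$ and the number of vertices in the second neighbourhood.

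The genuine difficulty, and the reason the hypothesis carries the two polynomial thresholds $m\ge(k^2+2k+2)^2+k+1$ and $m\ge(2k+3)^2+k+1$, is the near-star regime $d=m-k-1-O(1)$, where the affordable gap $(m-k)-d$ shrinks to a constant while $f$ is as small as $k+1$. There the crude $f/\sqrt{m-k}$ bound is not automatically smaller than the constant gap, and one must argue that the vertices of $W$ are then overwhelmingly leaves hanging off $N(u)$, each carrying eigenvector weight only $O(1/\rho)$, so that $\sum_{w\in W}x_w=O(f/\rho)$ with a small implied constant; the requirement $\sqrt{m-k}>2(k+1)$, i.e.\ $m>(2k+3)^2+k+1$ up to the stated rounding, is precisely what makes this term drop below $f-k\ge1$. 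The complementary bound $m\ge(k^2+2k+2)^2+k+1$ enters when $d$ is only moderately large and one bounds $\rho$ through a common-neighbour and edge count that must beat $\sqrt{m-k}\ge k^2+2k+2$. I expect the bulk of the technical work, and the main obstacle, to be this uniform two-regime control of $\sum_{w\in W}x_w$; once it is in place the inequality $\rho^2<m-k$ follows and contradicts $\rho\ge\sqrt{m-k}$, completing the proof.
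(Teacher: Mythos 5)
The paper you were given never proves this statement---it is quoted verbatim from Wang--Guo (reference [Wang-1], Theorem 4)---so there is no in-paper proof to compare against; I am therefore judging your proposal on its own merits, and it has a genuine gap at exactly the step you yourself flag as "the bulk of the technical work." Your framework is the standard, correct one: maximal-entry vertex $u$ with $x_u=1$, the walk identity $\rho^2 = d + \sum_{w\in W}x_w$ with $|N(u)\cap N(w)|\le 1$ from $C_4$-freeness, and the target inequality $\sum_{w\in W}x_w < f-k$ where $f=m-d\ge k+1$. But your only quantitative tool is $x_w\le d(w)/\rho$, and careful bookkeeping (matched vertices of $N(u)$ have degree $2+d_{N_2}(w)$, distance-two vertices degree $1+d_{\mathrm{out}}(w)$) turns this into $\sum_{w\in W}x_w\le 4f/\rho$, which forces the requirement $\rho>4f/(f-k)$, worst at $f=k+1$, i.e.\ $\rho>4(k+1)$. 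Under the stated hypothesis this holds only when $\sqrt{m-k}>k^2+2k+2\ge 4k+4$, i.e.\ $k\ge 3$; for $k=0,1,2$ (in particular the base case $k=0$ with $10\le m\le 16$) the argument fails. The heuristics you offer to close this are not correct: vertices of $W$ are never pendant neighbours of $u$ (a leaf of $u$ has no common neighbour with $u$), and in the critical configuration---the $k+1$ extra edges forming a matching inside $N(u)$, i.e.\ $G$ close to $S^+_{m,k+1}$---$W$ consists precisely of the $2(k+1)$ matched neighbours of $u$, each of degree $2$, carrying total weight about $2(k+1)/(\rho-1)$; no "overwhelmingly leaves" argument reduces this. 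The K\H{o}v\'ari--S\'os--Tur\'an bound is also a red herring, since the number of vertices of $G$ is not bounded in terms of $m$. Finally, your regime analysis is backwards: in this framework the hard case is always the near-star $f=k+1$, and moderate $d$ (large $f$) is the easy case, so the claim that the threshold $(k^2+2k+2)^2+k+1$ handles a "moderately large $d$" regime has no content.

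The missing idea is to apply the eigenvalue equation \emph{at the vertices of $W$} and absorb the mutual terms, rather than using the crude bound $x_w\le d(w)/\rho$. Write $e_1=e(N(u))$, $e_2=e(N(u),N_2)$, $e_3=f-e_1-e_2$, where $N_2$ is the second neighbourhood, and split $W=W_1\cup W_2$ into the matched vertices of $N(u)$ and the distance-two vertices. For a matched pair $w,w'$ one has $\rho x_w = x_u + x_{w'} + \sum_{z\in N_2\cap N(w)}x_z$; summing over $W_1$, using $|W_1|=2e_1$, $x_z\le x_u=1$, and that $w\mapsto w'$ is an involution gives $(\rho-1)\sum_{w\in W_1}x_w\le 2e_1+e_2$, while summing at distance two gives $\rho\sum_{w\in W_2}x_w\le e_2+2e_3$. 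Hence
\[
\sum_{w\in W}x_w \;\le\; \frac{2e_1+e_2}{\rho-1}+\frac{e_2+2e_3}{\rho}\;\le\;\frac{2f}{\rho-1},
\]
so $\rho^2\ge m-k$ together with $d\le m-k-1$ yields $f-k\le 2f/(\rho-1)$, i.e.\ $\rho\le 1+2f/(f-k)\le 2k+3$, contradicting $\rho\ge\sqrt{m-k}\ge\sqrt{(2k+3)^2+1}>2k+3$. This pairing/second-neighbourhood refinement (which in fact needs only the threshold $m\ge(2k+3)^2+k+1$) is what your sketch lacks; without it, the proposal does not prove the theorem.
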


A parallel line of research focuses on the \emph{signless Laplacian spectral radius} $q(G)$, which has been shown to encode crucial structural 
information about graphs. The systematic study of $Q$-spectral theory was pioneered by \cite{Cvetkovic-1,Cvetkovic-2,Cvetkovic-3}, who developed foundational results on the spectral properties of $Q(G)$. Subsequent work by \cite{De Freitas-1} established extremal bounds for $q(G)$ in graphs 
excluding small cycles, while \cite{Nikiforov-5} extended these results to forbidden even cycles.

A notable result by \cite{Zhai-1} (Theorem \ref{thm:zhai} below) gives a tight upper bound on $q(G)$ for graphs of size $m$ without isolated vertices.

\begin{thm}[\cite{Zhai-1}]\label{thm:zhai}
Let $G$ be a graph of size $m \geq 4$ with no isolated vertices. Then $q(G) \leq m+1$, with equality if and only if $G \cong K_{1,m}$.
\end{thm}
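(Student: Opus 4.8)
The plan is to prove the bound via the classical edge-degree estimate for the $Q$-index and then isolate the equality case by squeezing the Perron eigenvector. Write $q = q(G)$ and let $x = (x_v)_{v \in V(G)}$ be a Perron eigenvector of $Q(G)$, taken nonnegative and positive on each nontrivial component. For the inequality I would first record the combinatorial fact that, for any edge $uv$, the edges incident with $u$ or $v$ number exactly $d(u)+d(v)-1$, whence $d(u)+d(v)-1 \le m$, i.e. $d(u)+d(v) \le m+1$. Coupling this with the bound $q \le \max_{uv \in E(G)}(d(u)+d(v))$ gives $q \le m+1$ for every graph of size $m$. I would include the short proof of the edge bound, since its equality conditions are exactly what the equality analysis needs: pick $p$ with $x_p = \max_v x_v$ and a neighbour $r$ of $p$ with $x_r = \max_{w\sim p} x_w$; the eigen-equations at $p$ and $r$ yield $(q-d(p))x_p \le d(p)x_r$ and $(q-d(r))x_r \le d(r)x_p$, and since $q>d(p)$ and $q>d(r)$ all four quantities are nonnegative, so multiplying and cancelling $x_px_r>0$ gives $(q-d(p))(q-d(r))\le d(p)d(r)$, that is, $q \le d(p)+d(r)$.

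Next I would reduce to the connected case. If $G$ were disconnected, each component $G_i$ has $1 \le m_i \le m-1$ edges (no isolated vertices), so the inequality gives $q(G_i)\le m_i+1\le m$; since $q(G)=\max_i q(G_i)$, this yields $q(G)\le m<m+1$. Hence $q=m+1$ forces $G$ connected and $x$ strictly positive.

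The heart of the argument is the equality analysis. Assuming $q=m+1$, the chain $q \le d(p)+d(r)\le m+1$ collapses to equalities, forcing: (i) every neighbour of $p$ has $x$-value $x_r$; (ii) every neighbour of $r$ has $x$-value $x_p$ (the global maximum); and (iii) $d(p)+d(r)=m+1$, i.e. every edge meets $p$ or $r$. Let $T$ be the set of neighbours of $r$ other than $p$; I would show $T=\emptyset$. Any $w\in T$ has $x_w=x_p$ by (ii) and its neighbours lie in $\{p,r\}$ by (iii). If $w\not\sim p$, the eigen-equation at $w$ reads $(m+1)x_w = x_w + x_r$, forcing $x_r=m\,x_p$, contradicting $x_r\le x_p$ for $m\ge 4$. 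If instead $w\sim p$, then (i) gives $x_w=x_r$, so $x_p=x_r$; combined with (i)--(ii) and connectivity this makes $x$ constant, hence $G$ regular with $2d(v)=q=m+1$, which a short arithmetic check ($d(4-n)=2$) rules out for $m\ge 4$, the only regular solutions being $K_2$ and $K_3$. Therefore $T=\emptyset$, so $d(r)=1$, $d(p)=m$, every edge meets $p$, and $G\cong K_{1,m}$.

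I expect the main obstacle to be precisely this equality step: converting the collapsed inequality into the three eigenvector conditions and then using them, together with $m\ge 4$, to eliminate the competing near-extremal structures, namely a pendant vertex attached at $r$, a common neighbour of $p$ and $r$ (the ``book'' configuration), and the regular case. The inequality itself is routine; the delicacy lies in seeing that the hypothesis $m\ge 4$ is exactly what excludes the small regular exception $K_3$, for which $q=m+1$ also holds.
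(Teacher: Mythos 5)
The paper contains no proof of this statement to compare against: Theorem~\ref{thm:zhai} is quoted from \cite{Zhai-1} as a known result and is used as a black box. Your argument is therefore judged on its own, and it is a correct, self-contained proof. The inequality part is sound: for an edge $uv$ the edges meeting $\{u,v\}$ number $d(u)+d(v)-1\le m$, and your Perron-vector multiplication trick legitimately yields $q\le d(p)+d(r)$ (the cancellation is justified because $G$ may be assumed connected componentwise, so $x_p x_r>0$, and both factors $(q-d(p))x_p$, $(q-d(r))x_r$ are nonnegative directly from the eigen-equations). The reduction of equality to the connected case is right, and the collapse of the chain $q\le d(p)+d(r)\le m+1$ into your conditions (i)--(iii) is valid: equality in the product forces equality in each factor because both right-hand sides $d(p)x_r$ and $d(r)x_p$ are strictly positive. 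The three residual configurations are correctly eliminated, and the Diophantine condition $d(4-n)=2$ in the regular case indeed admits only $K_2$ and $K_3$, both with $m<4$. Two small remarks. First, the ``if and only if'' formally also requires the one-line converse $q(K_{1,m})=m+1$, which you never state; it follows, e.g., from the equitable quotient matrix
\begin{equation*}
\begin{pmatrix} m & m\\ 1 & 1\end{pmatrix},
\end{equation*}
whose eigenvalues are $0$ and $m+1$, together with Lemma~\ref{lem-1}. Second, your regularity detour in the common-neighbour case can be shortcut: such a $w$ has $N(w)\subseteq\{p,r\}$, hence $d(w)=2$, and $(q-2)x_w=x_p+x_r=2x_w$ gives $q=4$, i.e.\ $m=3<4$. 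Finally, note that the inequality (though not the equality characterization) also drops out of the paper's own toolkit: by the edge bound, $\sum_{v\in N(u)}d(v)\le d(u)(m+1-d(u))$ for every $u$, so Lemma~\ref{bound} gives $q(G)\le m+1$ at once, and its equality condition (regular or semiregular bipartite) would streamline the extremal analysis; your route trades that citation for full self-containedness.
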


Wang \cite{Wang-1} later generalized this result by introducing a parameter $k$ and proving Theorem \ref{thm:wang-guo}, which ensures the existence of a large star $K_{1,m-k}$ under a lower bound on $q(G)$.

\begin{thm}[\cite{Wang-1}]\label{thm:wang-guo}
Let $k \geq 0$ be an integer, and let $G$ be a graph of size $m \geq \max \{ \tfrac{1}{2}k^2 + 6k + 3, 7k + 25 \}$. 
If $q(G) \geq m - k + 1$, then $G$ contains a star $K_{1,m-k}$.
\end{thm}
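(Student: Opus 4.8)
The goal is to show that the spectral hypothesis forces a vertex of degree at least $m-k$, so I would argue by contradiction, assuming $\Delta(G)\le m-k-1$, and derive $q(G)<m-k+1$. First I would reduce to the connected case: if $C$ is the component of $G$ realizing $q(G)$, then $C$ has no isolated vertices, so Theorem~\ref{thm:zhai} gives $m-k+1\le q(G)=q(C)\le e(C)+1$, whence $e(C)\ge m-k$; thus it suffices to locate the star inside $C$, and I may assume $G$ is connected. Let $x$ be the Perron eigenvector of $Q(G)$ normalized so that $\max_v x_v=x_u=1$, and write $d=d(u)$. The eigen-equation at $u$ reads
\[
q\,x_u=d\,x_u+\sum_{v\sim u}x_v,\qquad\text{so}\qquad \sum_{v\sim u}x_v=q-d .
\]
Since $q\ge m-k+1$ and $d\le m-k-1$, the whole problem collapses to a single target: it is enough to prove $\sum_{v\sim u}x_v<2$, because then $q=d+\sum_{v\sim u}x_v<(m-k-1)+2=m-k+1$, a contradiction.

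Next I would develop the tools to bound this neighbor-sum. Applying the eigen-equation at an arbitrary vertex $w$ and using $x_z\le 1$ yields $x_w(q-d(w))\le d(w)$, i.e. $x_w\le d(w)/(q-d(w))$; as $q\approx m$ is large while each $d(w)\le\Delta$, this makes the entries of low-degree vertices genuinely small. I would then split the edges of $G$ into those incident to $u$, those inside $N(u)$, those between $N(u)$ and the non-neighbors, and those among the non-neighbors, and estimate $\sum_{v\sim u}x_v\le\sum_{v\sim u}d(v)/(q-d(v))$ against the global edge budget $\sum_{v\sim u}d(v)=d+2e_N+e_{N\bar N}\le 2m-d$. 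With $q\ge m-k+1$ this gives a bound on $\sum_{v\sim u}x_v$ that is just below $2$ provided no neighbor of $u$ has degree close to $\Delta$; the slack needed to push it strictly under $2$ is supplied by the size hypothesis on $m$.

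The genuine obstacle is the possibility of a second heavy vertex: a neighbor $v$ of $u$ whose degree is comparable to $\Delta$ makes $q-d(v)$ as small as $2$, so that $x_v$ can be close to $1$ and the crude bound on $\sum_{v\sim u}x_v$ fails. To handle this I would isolate this case and exploit that $u$ and $v$ jointly cover at least $d(u)+d(v)-1$ edges, so $d(u)+d(v)\le m+1$; hence a heavy neighbor forces $d=d(u)$ to be comparatively small, and a refined two-step expansion of $q^2x_u$ (tracking $\sum_{v\sim u}\sum_{w\sim v}x_w$ and separating the walks that return to $u$) shows that the extra spectral weight contributed by such a configuration cannot lift $q$ to $m-k+1$. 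Balancing the deficit $q-d\ge 2$ against these second-neighborhood contributions is exactly where the threshold $m\ge\max\{\tfrac12 k^2+6k+3,\,7k+25\}$ is extracted, the quadratic term in $k$ coming from the at most $k$ surplus edges that a near-extremal configuration may redistribute. Completing this case analysis, and checking that every branch yields $q<m-k+1$ under the contradiction hypothesis, finishes the proof.
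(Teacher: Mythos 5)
First, a point of reference: the paper contains no proof of this statement --- Theorem \ref{thm:wang-guo} is quoted from Wang and Guo \cite{Wang-1} as a known result --- so your attempt can only be judged on its own merits, not against an in-paper argument.

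On its own merits, your outline has a sound skeleton (contradiction via $\Delta(G)\le m-k-1$, reduction to the component realizing $q(G)$ through Theorem \ref{thm:zhai}, the identity $\sum_{v\sim u}x_v=q-d$ at the maximal entry, the bound $x_w\le d(w)/(q-d(w))$, and the edge budget $\sum_{v\sim u}d(v)\le 2m-d$), but it is not a proof, because the step you yourself call ``the genuine obstacle'' is resolved by assertion rather than argument. Declaring that ``a refined two-step expansion of $q^2x_u$ \dots shows that the extra spectral weight \dots cannot lift $q$ to $m-k+1$,'' and that the threshold on $m$ ``is extracted'' from this balancing, is precisely the technical core of the theorem: no inequality is written down, no case is closed, and the stated bound $m\ge\max\{\tfrac12k^2+6k+3,\,7k+25\}$ is never derived. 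Two further omissions matter. (i) Your ``easy'' case silently needs a lower bound on $d$: from $qx_u\le 2dx_u$ one gets $d\ge q/2$, and only with this does the estimate $\sum_{v\sim u}x_v\le(2m-d)/(q-D)<2$ close (one needs $d>2(m-q)+2D$, i.e.\ roughly $d>2k+2D$); you never state it. (ii) The dichotomy ``no neighbor of degree close to $\Delta$'' versus ``a neighbor of degree comparable to $\Delta$'' is not exhaustive as phrased: a neighbor of degree about $m/3$ already breaks the crude estimate (it gives roughly $9/4>2$), yet for such a neighbor $q-d(v)\approx 2m/3$ is not small and $x_v\le 1/2$, so it fits neither of your cases; the split must be made at an explicit threshold $D$ (of order $m/4$). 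Finally, once that threshold is explicit, the heavy case is in fact easier than the machinery you gesture at: a neighbor $v_0$ with $d(v_0)>D$ forces $d\le m+1-d(v_0)$, so the contradiction target relaxes from $\sum_{v\sim u}x_v<2$ to $\sum_{v\sim u}x_v<m-k+1-d\ge d(v_0)-k>D-k$, and crude bounds ($x_v\le 1$ for the at most $2m/D$ heavy neighbors, $x_v\le d(v)/(q-D)$ for the rest) suffice without any $q^2x_u$ expansion. As written, however, the attempt is an outline whose decisive step is missing.
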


In this paper, we establish a new extremal result in the spirit of Theorems \ref{thm:wang} and \ref{thm:wang-guo}, but for the \emph{signless Laplacian spectral radius}. Specifically, we prove that if $q(G)$ is sufficiently large, then $G$ must contain either a $C_4$ or a large star $K_{1,m-k}$, unless $G$ belongs to a specific exceptional family. Our main result is stated as Theorem \ref{thm-2}.

\begin{thm}\label{thm-2}
Let $k \geq 0$ be an integer, and let $G$ be a graph with $m$ edges (no isolated vertices) satisfying
$$ m \geq \max \left\{ 7k + 31, k^2 + 8(k +1) \right\}.$$
If $q(G) \geq q(S^+_{m,k+1})$, then $G$ contains a $4$-cycle or a star $K_{1,m-k}$ unless $G \cong S^+_{m,k+1}$.
\end{thm}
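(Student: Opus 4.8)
The plan is to prove the contrapositive form: assuming that $G$ contains neither a $C_4$ nor a $K_{1,m-k}$, I will show that $q(G)\ge q(S^+_{m,k+1})$ forces $G\cong S^+_{m,k+1}$ (so the extremal graph is the unique way to carry a large $Q$-index while avoiding both forbidden structures). Throughout write $q=q(G)$, let $x$ be the Perron eigenvector of $Q(G)$ normalized by $\max_v x_v=x_u=1$, and use the eigen-equation $q\,x_v=d(v)x_v+\sum_{w\sim v}x_w$. Two standing facts are available: since $G$ is $K_{1,m-k}$-free, $\Delta(G)=d(u)\le m-k-1$; and since $G$ is $C_4$-free, any two vertices share at most one common neighbour. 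We may take $G$ connected, since otherwise the component attaining $q(G)$ carries fewer than $m$ edges and the same analysis, applied with a smaller edge budget, keeps its $Q$-index below the threshold.

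First I would pin down the threshold exactly. Using the orbit partition of $S^+_{m,k+1}$ into the centre, the $2(k+1)$ matched leaves (degree $2$) and the $m-3k-3$ pendant leaves (degree $1$), the Perron entries satisfy $x_{\mathrm{pend}}=x_c/(\theta-1)$ and $x_{\mathrm{match}}=x_c/(\theta-3)$, where $\theta:=q(S^+_{m,k+1})$, and the centre equation gives
\[
\theta-(m-k-1)=\frac{2(k+1)}{\theta-3}+\frac{m-3k-3}{\theta-1}.
\]
Because the left side increases and the right side decreases in $\theta$, evaluating at $\theta=m-k$ and $\theta=m-k+1$ yields $m-k<\theta<m-k+1$. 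In particular $q\ge\theta>m-k$, the quantitative lower bound driving the rest of the proof, and the neighbour weight of the centre satisfies $\sum_{w\sim u}x_w=q-d(u)\in(1,2)$ in the extremal graph.

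The technical heart is to pin $d(u)=m-k-1$ exactly. From $q=d(u)+\sum_{w\sim u}x_w$ together with $q>m-k$ and $d(u)\le m-k-1$ we automatically get $\sum_{w\sim u}x_w=q-d(u)>1$; conversely it suffices to prove the matching upper bound $\sum_{w\sim u}x_w<2$, for then $d(u)>q-2>m-k-2$ forces $d(u)=m-k-1$. To control $\sum_{w\sim u}x_w$ I would sum the neighbour equations: writing $(q-d(w))x_w=1+\sum_{z\sim w,\ z\ne u}x_z$ and adding over $w\sim u$ gives $\sum_{w\sim u}(q-d(w))x_w=d(u)+\sum_{z\ne u}|N(u)\cap N(z)|\,x_z$, where $C_4$-freeness makes every coefficient $|N(u)\cap N(z)|\le 1$. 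Bounding the right-hand sum against the total weight $\sum_v x_v$ (itself controlled through the Rayleigh quotient and the edge budget $m$), while using $q-d(w)\ge q-\Delta>1$ on the left, should squeeze $\sum_{w\sim u}x_w$ strictly below $2$. I expect this estimate — forcing the neighbour weight below $2$ so that the integer $d(u)$ is pinned — to be the main obstacle, and it is precisely here that the hypotheses $m\ge 7k+31$ and $m\ge k^2+8(k+1)$ are consumed to make the error terms smaller than the relevant unit gaps.

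Finally I would identify the exact structure. With $d(u)=m-k-1$ there are exactly $k+1$ edges not incident to $u$. A direct quotient comparison shows that any surplus edge having an endpoint outside $N(u)$ (a vertex at distance $\ge 2$ from $u$) strictly lowers $q$ below $\theta$, since the weight carried a further step from the degree-$(m-k-1)$ centre contributes strictly less to the Rayleigh quotient; hence $V(G)=N[u]$ and all $k+1$ surplus edges lie inside $N(u)$. If two of them shared an endpoint $a$, say $a\sim b$ and $a\sim c$ with $b,c\in N(u)$, then $u,b,a,c$ would form a $C_4$; so the surplus edges form a matching, whence $G$ is exactly the star $K_{1,m-k-1}$ with $k+1$ independent edges added, i.e.\ $G\cong S^+_{m,k+1}$. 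The same computation shows $q(G)=\theta$ holds only for $S^+_{m,k+1}$, which delivers the stated uniqueness and completes the argument.
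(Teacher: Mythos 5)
Your skeleton (locate $q(S^+_{m,k+1})$ strictly between $m-k$ and $m-k+1$, pin $d(u^*)=m-k-1$, then force the $k+1$ surplus edges into a matching inside $N(u^*)$) coincides with the paper's, and your step computing the threshold is correct. But both substantive steps have genuine gaps. In the step pinning $d(u^*)$, the estimates you actually name do not close. On the left you invoke only $q-d(w)\ge q-\Delta>1$, and on the right you propose to bound $\sum_{z\ne u}|N(u)\cap N(z)|x_z$ by the total weight $\sum_v x_v$, ``controlled through the Rayleigh quotient.'' The Rayleigh quotient gives no control of the $\ell_1$-mass of the Perron vector (it yields statements like $q\le 2\Delta$; the trivial bound $\sum_v x_v\le n$ is of order $m$), so with a lower bound of order $1$ on $q-d(w)$ you only get $\sum_{w\sim u}x_w\lesssim m$, nowhere near $<2$. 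What rescues this step is an ingredient you never state: for $w\sim u^*$ one has $d(u^*)+d(w)\le m+1$ (count the edges meeting $\{u^*,w\}$), hence $q-d(w)\ge q-m-1+d(u^*)$, which is of order $m$ once one observes $d(u^*)\ge q/2$; combined with $x_z\le x_{u^*}$ and $\sum_{w\sim u^*}d(w)\le 2m-d(u^*)$, this does force $q-d(u^*)<2$. The paper uses exactly this $d(u^*)+d(u)\le m+1$ device (Claim 3.7), although it organizes the degree-pinning differently (Claims 3.2--3.4).

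The more serious gap is your final step. The sentence ``a direct quotient comparison shows that any surplus edge having an endpoint outside $N(u)$ strictly lowers $q$ below $\theta$'' asserts precisely the hardest part of the theorem; it is not something one reads off the Rayleigh quotient. To compare anything one needs quantitative separation of eigenvector entries: at least two pendant neighbours $u_1,u_2$ of $u^*$ must be shown to exist (an edge count from $e(A)+e(A,B)+e(B)=k+1$), their entries equal $x_{u^*}/(q-1)$ exactly, while every vertex at distance two and every relevant non-pendant neighbour must be shown to have degree at most $k+2$ and entry of order $k^2/(q-k)^2$ times $x_{u^*}$ --- this takes two successive passes through the eigen-equation --- and only then does the rotation $G-uw+u_1u_2$ gain spectral radius, with the hypothesis $m\ge k^2+8(k+1)$ consumed exactly there (your write-up never actually uses this hypothesis anywhere). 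Moreover, in your contrapositive framing a single such comparison proves nothing: $q(G')>q(G)$ contradicts nothing unless $G$ was set up as a maximizer of $q$ over the $\{C_4,K_{1,m-k}\}$-free family (the paper's framework, Claims 3.5--3.9), or unless you iterate rotations until all surplus edges lie inside $N(u^*)$ and conclude $q(S^+_{m,k+1})>q(G)\ge q(S^+_{m,k+1})$. The same defect affects your connectivity reduction: a component with $m_1<m$ edges can still have $q$ as large as $m_1+1>m-k$, so ``a smaller edge budget'' alone does not finish; one needs either the paper's edge-swap inside the extremal family (Claim 3.1) or an explicit monotonicity comparison of thresholds such as $q(S^+_{m_1,k_1+1})<q(S^+_{m,k+1})$, neither of which appears in your argument.
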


Here, $S^+_{m,k+1}$ denotes the graph obtained by adding $k+1$ independent edges to the independent set of the star $K_{1,m-k-1}$. 
Our result can be viewed as the \emph{$Q$-spectral counterpart} of Theorems \ref{thm:wang} and \ref{Wang-11}, bridging the gap between adjacency spectral conditions and signless Laplacian spectral conditions for forbidden subgraphs.
In particualr, if  $k=0$, Theorem \ref{thm-2} implies Theorem \ref{thm:zhai}.
Notice that $q(S^+_{m,k+1})<m-k+1 $ from Lemma \ref{lem-2}. 
Theorem \ref{thm-2} can be view a refine of Theorem \ref{thm:wang-guo}.

Therefore,
the main contributions of this work are two aspects.
We provide a new sufficient condition, in terms of the signless Laplacian spectral radius, that guarantees the existence of a $4$-cycle or a large star.
We identify the extremal graph $S^+_{m,k+1}$ that attains the bound, extending previous results on spectral extremal problems.

The rest of this paper is organized as follows: In Section 2, we introduce key definitions and preliminary results. Section 3 presents the proof 
of Theorem \ref{thm-2}, and Section 4 discusses applications and open problems.

\section{Preliminaries}\label{sec:preliminaries}

We begin by establishing notation and recalling fundamental results that will be essential for our subsequent analysis.

\begin{lem}[\cite{Cvetkovic1}]\label{lem-subgraph-radius}
For any connected graph $G$ and its subgraph $H$, the following inequality holds for their signless Laplacian spectral radii
$q(H) \leq q(G)$. 
Moreover, if $H$ is a proper subgraph, the inequality becomes strict
$q(H) < q(G)$. 
\end{lem}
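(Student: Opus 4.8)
The plan is to argue directly from the Rayleigh quotient of the signless Laplacian, exploiting the positive semidefinite quadratic form it induces. The cornerstone is the identity
$$x^{\top} Q(G)\, x = \sum_{ij \in E(G)} (x_i + x_j)^2,$$
valid for every vector $x$ indexed by $V(G)$, which follows by expanding $x^{\top} D(G) x + x^{\top} A(G) x$ and regrouping. Combined with the variational characterization $q(G) = \max_{\|y\| = 1} y^{\top} Q(G)\, y$, this identity makes the monotonicity under edge deletion transparent, since every summand is a square and hence nonnegative.

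For the non-strict inequality I would take a unit Perron vector $x$ of $Q(H)$; as $Q(H)$ is entrywise nonnegative, Perron--Frobenius permits the choice $x \ge 0$. Extend $x$ to a vector $\tilde x$ on $V(G)$ by assigning $0$ to every coordinate of $V(G) \setminus V(H)$, so that $\tilde x_i + \tilde x_j = x_i + x_j$ for each edge of $H$. Because $E(H) \subseteq E(G)$ and each term $(\tilde x_i + \tilde x_j)^2 \ge 0$,
$$q(G) \ge \tilde x^{\top} Q(G)\, \tilde x = \sum_{ij \in E(H)} (\tilde x_i + \tilde x_j)^2 + \sum_{ij \in E(G) \setminus E(H)} (\tilde x_i + \tilde x_j)^2 \ge \sum_{ij \in E(H)} (x_i + x_j)^2 = q(H),$$
which yields $q(H) \le q(G)$.

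For strictness, suppose $H$ is a proper subgraph yet $q(H) = q(G)$, forcing both inequalities above to be tight. Tightness of the first makes $\tilde x$ a maximizer of the Rayleigh quotient, hence an eigenvector of $Q(G)$ for $q(G)$; tightness of the second forces $\tilde x_i + \tilde x_j = 0$ for every edge in $E(G) \setminus E(H)$. Since $G$ is connected, $A(G)$ and therefore $Q(G)$ is irreducible, so by Perron--Frobenius its only nonnegative eigenvector is strictly positive; thus $\tilde x > 0$ and every $\tilde x_i + \tilde x_j > 0$, a contradiction unless $E(G) \setminus E(H) = \varnothing$. That remaining case is ruled out because if $E(H) = E(G)$ while $H \ne G$, then $V(H) \subsetneq V(G)$ and the vertices of $V(G) \setminus V(H)$ are isolated in $G$, impossible for a connected graph carrying an edge. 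Hence $q(H) < q(G)$.

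The main obstacle is the rigidity step: upgrading the bare numerical equality $q(H) = q(G)$ into the structural statement that $\tilde x$ is a genuine positive Perron eigenvector of $Q(G)$. This is precisely where connectedness is indispensable, as it furnishes the irreducibility required to invoke the uniqueness and strict-positivity clauses of the Perron--Frobenius theorem, which together collapse the equality case and expose the forbidden vanishing on the deleted edges.
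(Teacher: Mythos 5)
Your proposal is correct, but note that the paper contains no proof of this lemma to compare against: it is quoted as a known result from the monograph of Cvetkovi\'{c}, Doob and Sachs \cite{Cvetkovic1}. Your argument is the standard self-contained proof of this monotonicity, and every step checks out: the edge decomposition $x^{\top}Q(G)x=\sum_{ij\in E(G)}(x_i+x_j)^2$, the zero-extension of a nonnegative top eigenvector of $Q(H)$ giving $q(H)\le q(G)$, and the rigidity step, where tightness of the Rayleigh bound forces $\tilde x$ into the $q(G)$-eigenspace, irreducibility of $Q(G)$ (valid since the off-diagonal pattern of $Q(G)$ is that of $A(G)$ and $G$ is connected) forces $\tilde x>0$ via Perron--Frobenius, and the two ways $H$ can be proper --- a missing edge, whose endpoints would then satisfy $\tilde x_i+\tilde x_j=0$, or a missing vertex, which would either zero out a coordinate of $\tilde x$ or be isolated in $G$ --- are both handled. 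The only quibbles are cosmetic: the weak inequality $q(H)\le q(G)$ as you prove it never uses connectivity of $G$ (which is fine, since the hypothesis only strengthens the claim), and the degenerate case $G=K_1$, where the only proper subgraph is the null graph and $q(H)$ is undefined, is implicitly excluded; this is harmless.
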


\begin{lem}\label{lem-1}
Let $M$ be a real symmetric $n\times n$ matrix, and let $B_{\pi}$ be its equitable quotient matrix corresponding to a partition $\pi$ of $[n]$. Then:
\begin{enumerate}
\item[(i)] All eigenvalues of $B_{\pi}$ are eigenvalues of $M$.
\item[(ii)] If $M$ is nonnegative and irreducible, the spectral radii
$\rho(M) = \rho(B_{\pi})$. 
\end{enumerate}
\end{lem}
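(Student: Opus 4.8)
The plan is to encode the equitable partition $\pi = \{V_1, \dots, V_t\}$ by its $n \times t$ characteristic matrix $S$, where $S_{si} = 1$ if $s \in V_i$ and $0$ otherwise. The defining property of an equitable partition---that for every $s \in V_i$ the sum $\sum_{l \in V_j} M_{sl}$ depends only on $i$ and $j$, and equals $(B_{\pi})_{ij}$---translates into the single matrix identity $MS = S B_{\pi}$. Since the columns of $S$ are nonzero and have pairwise disjoint supports, $S$ has full column rank; this is the fact I will lean on repeatedly.

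For part (i), I would take any eigenpair $B_{\pi} x = \lambda x$ and compute $M(Sx) = (MS)x = (S B_{\pi})x = S(B_{\pi} x) = \lambda (Sx)$. Because $S$ has full column rank and $x \neq 0$, the vector $Sx$ is nonzero, so $\lambda$ is an eigenvalue of $M$ with eigenvector $Sx$. This simultaneously shows that $\mathrm{Col}(S)$ is an $M$-invariant subspace on which the action of $M$ carries exactly the spectrum of $B_{\pi}$.

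For part (ii) I would invoke the Perron--Frobenius theorem: since $M$ is nonnegative and irreducible, $\rho(M)$ is a simple eigenvalue with a strictly positive eigenvector $v$. By part (i) every eigenvalue of $B_{\pi}$ is an eigenvalue of $M$, hence at most $\rho(M)$ in modulus, so it suffices to show that $\rho(M)$ is itself an eigenvalue of $B_{\pi}$. The decisive point is that $v$ lies in $\mathrm{Col}(S)$, i.e. $v$ is constant on each cell $V_i$. To see this, note that $M$ is symmetric, so $\mathrm{Col}(S)^{\perp}$ is also $M$-invariant; writing $v = v_1 + v_2$ with $v_1 \in \mathrm{Col}(S)$ and $v_2 \in \mathrm{Col}(S)^{\perp}$, the invariance forces $Mv_1 = \rho(M)v_1$ and $Mv_2 = \rho(M)v_2$ separately. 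If $v_2 \neq 0$, then simplicity of $\rho(M)$ makes $v_2$ a nonzero multiple of $v$, which is impossible since $v$ is positive and $\langle v, S\mathbf{1}\rangle = \langle v, \mathbf{1}_n\rangle > 0$ shows $v \notin \mathrm{Col}(S)^{\perp}$. Hence $v_2 = 0$ and $v = Sx$ for some $x$; substituting into $Mv = \rho(M)v$ gives $S B_{\pi} x = \rho(M) S x$, and injectivity of $S$ yields $B_{\pi} x = \rho(M) x$, so $\rho(B_{\pi}) = \rho(M)$.

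The main obstacle is exactly this eigenvector-localization step in part (ii): proving that the Perron vector is constant on the cells of the partition. Everything else is bookkeeping with the identity $MS = S B_{\pi}$, but this step genuinely requires both the symmetry of $M$ (to split $\mathbb{R}^n$ into two orthogonal $M$-invariant pieces) and the simplicity of the Perron root (to rule out a spurious $\rho(M)$-eigenvector inside $\mathrm{Col}(S)^{\perp}$). An alternative route that sidesteps the orthogonal-complement argument is to symmetrize $B_{\pi}$ directly by conjugation with $D^{1/2}$, where $D = \mathrm{diag}(|V_1|, \dots, |V_t|)$, turning it into the symmetric matrix $\widehat{B} = D^{1/2} B_{\pi} D^{-1/2}$ whose Rayleigh quotients embed into those of $M$ through $S D^{-1/2}$; then $\rho(B_{\pi}) = \rho(\widehat{B}) \le \rho(M)$ with the matching lower bound from part (i). I would nevertheless keep the invariant-subspace proof as the primary argument, since it is the shorter of the two.
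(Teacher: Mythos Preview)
Your proof is correct and complete. The paper itself gives no details at all: it simply cites Godsil--Royle for part (i) (``the interlacing property of equitable partitions'') and Brouwer--Haemers for part (ii) (``a consequence of the Perron--Frobenius theorem''), so your argument via the intertwining relation $MS = SB_{\pi}$ and the invariant-subspace decomposition $\mathrm{Col}(S) \oplus \mathrm{Col}(S)^{\perp}$ is precisely the standard textbook proof those citations are pointing to, written out in full.
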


\begin{proof}
The first statement follows from the interlacing property of equitable partitions \cite[pp. 196--198]{Godsil-1}. The second result is a consequence of the Perron-Frobenius theorem for irreducible matrices \cite[pp. 30]{Brouwer-1}.
\end{proof}

\begin{lem}[\cite{Feng}]\label{bound}
Let $G$ be a connected graph. Then
$$q(G) \le \max_{u \in V (G)}\left\{d(u) +\frac{\sum_{v\in N_G(u)} d(v)}{d(u)} \right\},$$ with equality if and only if $G$ is either a semiregular bipartite graph or a regular graph.
\end{lem}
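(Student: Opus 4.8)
The plan is to derive the inequality by a diagonal similarity transformation and then read off the equality case from the resulting nonnegative matrix. Since $G$ is connected with at least two vertices, $Q(G)=D(G)+A(G)$ is nonnegative and irreducible and every degree is positive, so $D(G)$ is invertible. First I would pass to the similar matrix $\widehat Q=D(G)^{-1}Q(G)D(G)$, which has the same spectrum as $Q(G)$ and is again nonnegative and irreducible. A direct computation gives, for each vertex $u$, the $u$-th row sum
$$\sum_{v}\widehat Q_{uv}=\frac{1}{d(u)}\Big(d(u)^2+\sum_{v\in N_G(u)}d(v)\Big)=d(u)+\frac{\sum_{v\in N_G(u)}d(v)}{d(u)}.$$
Because the spectral radius of a nonnegative matrix is at most its maximum row sum, this already yields $q(G)=\rho(\widehat Q)\le \max_{u}\{d(u)+m_u\}$, where $m_u=\frac{1}{d(u)}\sum_{v\in N_G(u)}d(v)$ denotes the average neighbour degree.

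For the equality statement I would use the sharp form of the row-sum bound for irreducible nonnegative matrices (a consequence of the Perron--Frobenius theorem underlying Lemma \ref{lem-1}): $\rho(\widehat Q)$ equals the maximum row sum if and only if all row sums of $\widehat Q$ coincide. Thus equality in the lemma is equivalent to the existence of a constant $c$ with $d(u)+m_u=c$ for every $u$; equivalently the positive degree vector $\mathbf d=(d(v))_{v\in V(G)}$ satisfies $Q(G)\mathbf d=c\,\mathbf d$, so $\mathbf d$ is the Perron eigenvector of $Q(G)$ and $c=q(G)$. The easy direction is a one-line check: if $G$ is $r$-regular then $d(u)+m_u=2r$, and if $G$ is $(r,s)$-semiregular bipartite then $d(u)+m_u=r+s$, in both cases a constant.

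The main work is the converse, and this is the step I expect to be the obstacle: turning the numerical identity $d(u)+m_u=c$ into global bipartite/regular structure. Writing $\Delta$ and $\delta$ for the maximum and minimum degrees, the relation $\sum_{v\in N_G(u)}d(v)=d(u)\bigl(c-d(u)\bigr)$ shows the average neighbour degree of $u$ equals $c-d(u)$. Evaluating this at a vertex of degree $\Delta$ gives $c-\Delta\ge\delta$, and at a vertex of degree $\delta$ gives $c-\delta\le\Delta$, whence $c=\Delta+\delta$. Consequently every minimum-degree vertex has average neighbour degree $\Delta$, which (since all degrees are at most $\Delta$) forces all its neighbours to have degree exactly $\Delta$; symmetrically every maximum-degree vertex has all neighbours of degree $\delta$. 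Partitioning $V(G)$ into the sets $X$ and $Z$ of vertices of degree $\delta$ and $\Delta$ and the set $Y$ of vertices of intermediate degree, this shows there is no edge joining $Y$ to $X\cup Z$. As $G$ is connected and $X\neq\emptyset$, I conclude $Y=\emptyset$, so only the degrees $\delta$ and $\Delta$ occur: if $\delta=\Delta$ the graph is regular, while if $\delta<\Delta$ the graph is bipartite with parts $X,Z$ and $(\delta,\Delta)$-semiregular. The leverage making this structural step work is the forced value $c=\Delta+\delta$ together with the connectivity of $G$.
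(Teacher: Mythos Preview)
The paper does not supply a proof of this lemma; it is quoted as a known result from \cite{Feng}. Your argument is correct and is in fact the standard proof: conjugating $Q(G)$ by $D(G)$ turns the quantity $d(u)+\tfrac{1}{d(u)}\sum_{v\in N(u)}d(v)$ into a row sum, the nonnegative-matrix row-sum bound gives the inequality, and Perron--Frobenius makes equality equivalent to constant row sums. Your extraction of the structural consequences from $d(u)+m_u\equiv c$ is also clean: the two evaluations at extremal-degree vertices pin down $c=\Delta+\delta$, whence vertices of degree $\delta$ see only vertices of degree $\Delta$ and vice versa, and connectivity rules out any intermediate degrees. One tiny remark: you tacitly assume $G$ has at least two vertices so that $D(G)$ is invertible; this is harmless since for the single-vertex graph the expression in the lemma is undefined anyway.
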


Our key technical estimate concerns the signless Laplacian spectral radius of the extremal graph $S_{m,k+1}^+$.
\begin{lem}\label{lem-2}
 For $m \geq 7k+31$,
the signless Laplacian spectral radius of $S_{m,k+1}^+$ satisfies:
\[ m-k+\tfrac{1}{m^2} < q(S_{m,k+1}^+) < m - k +\frac{2(k+1)}{m-k-1}. \]
\end{lem}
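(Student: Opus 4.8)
The plan is to reduce the computation of $q(S^+_{m,k+1})$ to a $3\times 3$ equitable quotient matrix, simplify its characteristic polynomial to a single clean cubic, and then sandwich the dominant eigenvalue between the two stated bounds by sign tests at $L:=m-k+\tfrac1{m^2}$ and $U:=m-k+\tfrac{2(k+1)}{m-k-1}$. First I would exhibit the natural vertex partition of $S^+_{m,k+1}$ into $V_1=\{\text{the center}\}$, $V_2=\{\text{the }2(k+1)\text{ matched leaves, each of degree }2\}$, and $V_3=\{\text{the }m-3k-3\text{ pendant leaves, each of degree }1\}$. The hypothesis $m\ge 7k+31$ forces $m-3k-3>0$, so $V_3$ is nonempty and all three cells are genuine; the graph is connected. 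This partition is equitable for $Q=D+A$, with quotient matrix
$$B=\begin{pmatrix} m-k-1 & 2k+2 & m-3k-3\\ 1 & 3 & 0\\ 1 & 0 & 1\end{pmatrix}.$$
Since $Q(S^+_{m,k+1})$ is nonnegative, irreducible (the graph is connected) and positive semidefinite, Lemma \ref{lem-1}(ii) gives $q(S^+_{m,k+1})=\rho(B)$, the largest root of $\det(xI-B)$.

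Next I would expand $\det(xI-B)$ along the first row to obtain $(x-a)(x-1)(x-3)-b(x-1)-c_0(x-3)$, where $a=m-k-1$, $b=2(k+1)$ and $c_0=m-3k-3$. The decisive observation is the identity $c_0=a-b$, which collapses the cubic to the clean form
$$f(x):=x(x-3)\big(x-(m-k)\big)-4(k+1),$$
as one verifies by comparing values at $x=0,\,3,\,m-k$ (each side equals $-4(k+1)$ there) together with the matching leading coefficient. Thus $q(S^+_{m,k+1})$ is the largest root of $f$.

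To localize that root I would first note that on the interval $[4,\,m-k]$ the product $x(x-3)\big(x-(m-k)\big)$ is nonpositive, so $f<0$ there; combined with $f(m-k)=-4(k+1)<0$ and $f(x)\to+\infty$, this shows $f$ has a unique root $q^\ast>m-k$, while every other real root lies below $4$. Consequently, for any $x\ge m-k$ one has $x<q^\ast\iff f(x)<0$ and $x>q^\ast\iff f(x)>0$, so it suffices to check two signs. For the lower bound, $f(L)=\tfrac{1}{m^2}L(L-3)-4(k+1)$, and since $L(L-3)/m^2<2\le 4(k+1)$ we get $f(L)<0$, hence $q^\ast>L$. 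For the upper bound, $f(U)=4(k+1)\big(\tfrac{U(U-3)}{2(m-k-1)}-1\big)$, and writing $w=m-k$ the required inequality $U(U-3)>2(m-k-1)$ follows from $U(U-3)\ge w(w-3)$ and $w^2-5w+2>0$, which holds because $w=m-k\ge 6k+31\ge 31$; hence $f(U)>0$ and $q^\ast<U$.

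The cofactor expansion and the two elementary inequalities are routine and are comfortably covered by $m\ge 7k+31$. The one genuinely delicate point — and the step I would be most careful about — is the algebraic collapse $c_0=a-b$ that reduces the characteristic cubic to $x(x-3)\big(x-(m-k)\big)-4(k+1)$, together with the accompanying root-localization argument on $[4,\,m-k]$. It is precisely this simplification that makes the sharp constants $\tfrac1{m^2}$ and $\tfrac{2(k+1)}{m-k-1}$ fall out cleanly; without it, the sign analysis at $L$ and $U$ would be considerably messier.
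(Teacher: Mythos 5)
Your proof is correct, and while your setup coincides with the paper's, your treatment of the upper bound takes a genuinely different route. The paper uses the identical three-class equitable partition and quotient matrix $B$ (your description of the matched leaves as degree-$2$ vertices is the accurate one; the paper's ``degree 3'' is really the quotient diagonal entry $d(u)+|N(u)\cap V_2|$), and your factored cubic $x(x-3)\bigl(x-(m-k)\bigr)-4(k+1)$ is exactly the paper's characteristic polynomial $x^3-(m-k+3)x^2+3(m-k)x-4(k+1)$. The lower bound is the same idea in both proofs --- a sign test at $m-k+\tfrac{1}{m^2}$ --- though your factored form makes it cleaner (the paper's displayed intermediate value of $f$ at that point contains an algebra slip, with the correct sign conclusion). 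The real divergence is the upper bound: the paper abandons the characteristic polynomial there and invokes Lemma \ref{bound}, the Feng--Yu estimate $q(G)\le\max_u\bigl\{d(u)+\tfrac{1}{d(u)}\sum_{v\in N(u)}d(v)\bigr\}$, verifying that this maximum for $S^+_{m,k+1}$ is at most $m-k+\tfrac{2(k+1)}{m-k-1}$; you instead show $f(U)>0$ and conclude $q<U$ by root localization. Your route is self-contained (no appeal to Lemma \ref{bound}) and extracts both constants from one identity, but it is precisely here that you assert slightly more than you prove: the claim that $f$ has a \emph{unique} root $q^\ast>m-k$ does not follow merely from $f(m-k)<0$ and $f\to+\infty$ (that only gives an odd number of roots beyond $m-k$, so possibly three), and your sign test at $U$ genuinely needs this uniqueness, since $f(U)>0$ alone cannot distinguish $U>q^\ast$ from $U$ lying between two of three large roots. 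The patch is one line: $f''(x)=6x-2(m-k+3)\ge 4(m-k)-6>0$ for $x\ge m-k$, so $f$ is strictly convex on $[m-k,\infty)$ and, being negative at $m-k$, vanishes there exactly once; alternatively, three roots above $m-k$ would have sum exceeding $m-k+3$. With that sentence added, your two computations ($f(L)<0$ from $L(L-3)<2m^2$ and $4(k+1)\ge 4$, and $f(U)>0$ from $w^2-5w+2>0$ with $w=m-k\ge 31$) are correct, and the argument is complete.
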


\begin{proof}
We analyze the equitable partition of $S_{m,k+1}^+$ with the following three vertex classes:
${V}_1$: The central vertex (degree $m-k-1$);
${V}_2$: The $2k+2$ vertices incident to the added matching edges (degree 3);
${V}_3$: The remaining $m-3k-3$ pendant vertices (degree 1).
This partition yields the quotient matrix:
\[ B = \begin{pmatrix} 
m-k-1 & 2k+2 & m-3k-3 \\ 
1 & 3 & 0 \\ 
1 & 0 & 1 
\end{pmatrix}. \]
The characteristic polynomial of $B$ is
$
f(x) = \det(xI - B)$. Moreover, $ f(x) = x^3 - (m - k + 3)x^2 + 3(m - k)x - 4(k + 1)$.
Since $m \geq 7k+31$,
for $x = m - k + \tfrac{1}{m^2}$, we get
\begin{align*}
f\left(m - k + \tfrac{1}{m^2}\right) &= -\tfrac{3}{m^4} + \tfrac{3(m-k)}{m^2} - 4(k+1) 
< \tfrac{3}{m} - 4(k+1)
< 0.
\end{align*}
Since $f(x) \to +\infty$ as $x \to +\infty$ and $q(S_{m,k+1}^+)$ is the largest root, by the Intermediate Value Theorem there must be a root in $(m-k+\frac{1}{m^2}, \infty)$.
Thus, $q(S_{m,k+1}^+)>m-k+\tfrac{1}{m^2}$.

On the other hand, let $G\cong S_{m,k+1}^+$, we can easily verify that
\[\max_{u \in V (G)}\left\{d(u) +\frac{\sum_{v\in N_G(u)} d(v)}{d(u)} \right\}\leq m - k +\frac{2(k+1)}{m-k-1}.\]
By Lemma \ref{bound}, we have
$q(S_{m,k+1}^+) < m - k +\frac{2(k+1)}{m-k-1}$.
Combining both bounds gives the desired inequality.
\end{proof}

\begin{remark}
The graph $S_{m,k+1}^+$ serves as an extremal example since:
It contains neither $K_{1,m-k}$ nor $C_4$ as subgraphs;
 It satisfies the strict inequality $q(S_{m,k+1}^+) > m - k + m^{-2}$.
This demonstrates the tightness of our main result.
\end{remark}

\section{The Proof of Theorem \ref{thm-2}}\label{sec-3}

We now present the final arguments to establish Theorem \ref{thm-2}. 
\begin{proof}
Let $G$ be a graph attaining the maximal $Q$-index among all $\{K_{1,m-k}, C_4\}$-free graphs with no isolated vertices and size $m \geq \max\{7k + 31, k^2 + 8(k + 1)\}$.
Clearly, $\Delta(G) \leq m - k - 1$.
Our goal is to prove that $G\cong S_{m,k+1}^+$.

Let $\mathbf{x}$ be the Perron eigenvector of the signless Laplacian matrix $Q(G)$ corresponding to the signless Laplacian spectral radius $q(G)$. 
We adopt the following notation:
$x_{u^*} = \max_{u \in V(G)} x_u$ (the maximal eigenvector entry);
$A = N(u^*)$ (the neighborhood of $u^*$);
$B = V(G) \setminus N[u^*]$ (vertices not adjacent to $u^*$);
$N[u^*] = N(u^*) \cup \{u^*\}$ (the closed neighborhood);
$A_1={u\in A|d(u)=1}$ (vertices in $A$with degree 1 in the whole graph $G$);
$A_2=A\setminus A_1$. 
The signless Laplacian eigenvalue equations establish a fundamental identity for each vertex $u \in V(G)$:
\begin{equation}\label{eq-1}
(q - d(u))x_u = \sum_{v \in N(u)} x_v.
\end{equation}

This equation will serve as the foundation for our subsequent analysis of the graph structure.
Firstly, we will prove that the graph $G$ is connected.

\begin{clm}\label{clm-1}
The graph $G$ is connected.
\end{clm}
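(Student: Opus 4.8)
The plan is to argue by contradiction, exploiting the extremality of $G$ through a graph transformation rather than through a direct spectral estimate. Suppose $G$ were disconnected. Since $G$ has no isolated vertices, each of its (at least two) components carries at least one edge, and because the signless Laplacian of a disconnected graph is block-diagonal, $q(G)=\max_i q(G_i)$, the maximum being attained on some component $G_1$ with $m_1 := e(G_1) \le m-1$ edges. The idea is then to reassemble the $m-m_1 \ge 1$ edges lying outside $G_1$ into a pendant path grown from a single low-degree vertex of $G_1$, producing a \emph{connected} graph $G'$ in the same forbidden-subgraph class but with strictly larger $Q$-index, contradicting the maximality of $G$.

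Concretely, I would first locate a vertex $w \in V(G_1)$ with $d_{G_1}(w) \le m-k-2$, i.e.\ one with room to absorb an extra edge without violating $K_{1,m-k}$-freeness. I would then discard all components other than $G_1$ and attach to $w$ a fresh path $w\,p_1 p_2 \cdots p_{m-m_1}$ on $m-m_1$ new vertices, so that $e(G')=m$. This $G'$ is connected, hence has no isolated vertices, and remains $C_4$-free since appending a pendant path creates no new cycle. Moreover, among the old vertices only the degree of $w$ changes, rising to at most $m-k-1$, while each $p_i$ has degree at most $2$; thus $\Delta(G') \le m-k-1$ and $G'$ is $K_{1,m-k}$-free. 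Therefore $G'$ is a $\{K_{1,m-k},C_4\}$-free graph of size $m$ with no isolated vertices. Finally, since $G_1$ is a proper connected subgraph of the connected graph $G'$, Lemma \ref{lem-subgraph-radius} gives $q(G') > q(G_1) = q(G)$, the desired contradiction.

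The main obstacle, and the only place where the size hypothesis enters, is guaranteeing that a vertex $w$ with spare degree actually exists, i.e.\ that $G_1$ is not $(m-k-1)$-regular. If every vertex of $G_1$ had degree exactly $m-k-1$, then $G_1$ would be $(m-k-1)$-regular on $n_1 \ge m-k$ vertices, forcing $m_1 = \tfrac12 n_1(m-k-1) \ge \tfrac12 (m-k)(m-k-1)$; but $m \ge 7k+31$ makes this quadratic lower bound exceed $m-1 \ge m_1$, a contradiction, so $w$ exists. I expect this regularity-exclusion step, together with the bookkeeping that keeps $G'$ simultaneously $C_4$-free and $K_{1,m-k}$-free, to be the only delicate point; everything else follows immediately from the block-diagonal structure of $Q$ and the strict subgraph monotonicity of Lemma \ref{lem-subgraph-radius}.
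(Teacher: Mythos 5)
Your proof is correct and follows essentially the same strategy as the paper: assume disconnectedness, take the component $G_1$ realizing $q(G)$, locate a vertex of degree at most $m-k-2$ by an edge-counting argument, and build a $\{K_{1,m-k},C_4\}$-free graph of size $m$ that properly contains $G_1$, so that Lemma \ref{lem-subgraph-radius} contradicts the extremality of $G$. The only difference is in the surgery — the paper deletes an edge of $G_2$ and adds an edge joining the two components, while you discard the other components and attach a pendant path — and your variant is in fact slightly cleaner, since the paper's arbitrary edge deletion in $G_2$ can leave isolated vertices, an issue your construction avoids.
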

\begin{proof}
Assume for contradiction that $G$ is disconnected. 
Let $G_1$ be the component with $q(G_1)=q(G)$ and $G_2=G\setminus  G_1$.
Since $G$ has no isolated vertices,
we have $|V(G_i)|\geq 2$. 

We first show that each  $G_i$ contains a vertex of degree at most $m-k-2$, where $i=1,2$. 
Suppose not; then   all vertices in $G_i$ have degree at least $m-k-1$.
Notice that  $|V(G_i)| \geq 2$.
Therefore,
    \[
       m\geq e(G_i)+1 \geq  2(m - k -  1),
    \]
which gives that $m\leq 2(k+1)$, a contradiction.  
Thus, each $G_i$ has a vertex, say $u_i$ with $d_{G_i}(u_i) \leq m - k$ for $i=1,2$.

Now construct $G'$ from $G$ by deleting an arbitrary edge from $G_2$ and adding the edge $u_1u_2$. 
Clearly, $G'$ has $m$ edges,
$\Delta(G') \leq m - k + 1$ and 
$G'$ is $C_4$-free.
By Lemma~\ref{lem-subgraph-radius}, $q(G') > q(G)$, contradicting the maximality of $G$. 
Hence, $G$ must be connected.
\end{proof}

\begin{clm}\label{clm-6}
For every vertex $u \in A \cup B$, we have $d_A(u) \leq 1$ and $x_u < \frac{1}{2}x_{u^*}$.
\end{clm}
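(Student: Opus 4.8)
The plan is to handle the two assertions separately, treating the combinatorial bound $d_A(u)\le 1$ first and the spectral bound $x_u<\tfrac12 x_{u^*}$ second. The first assertion is purely a consequence of $C_4$-freeness: if some $u\in A\cup B$ had two neighbours $v_1,v_2\in A=N(u^*)$, then $u^*,v_1,u,v_2$ would be four distinct vertices (note $u\neq u^*$ and $v_1,v_2\neq u^*$) spanning the $4$-cycle $u^*v_1uv_2u^*$, contradicting that $G$ is $C_4$-free. Hence $d_A(u)\le 1$ for every $u\in A\cup B$.

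For the spectral bound it suffices to control the largest entry over $A\cup B$, so I would set $\beta=\max_{u\in A\cup B}x_u=x_{u_0}$ and aim to prove $\beta<\tfrac12 x_{u^*}$. Two preliminary estimates drive the argument. First, since $S^+_{m,k+1}$ is itself $\{K_{1,m-k},C_4\}$-free with no isolated vertex, maximality of $G$ together with Lemma~\ref{lem-2} gives $q=q(G)\ge q(S^+_{m,k+1})>m-k$. Second, evaluating the identity~\eqref{eq-1} at $u^*$ yields $(q-d(u^*))x_{u^*}=\sum_{v\in A}x_v\le d(u^*)x_{u^*}$, so $\Delta(G)\ge d(u^*)\ge q/2>(m-k)/2$. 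Counting the edges incident to $u^*$ and to a fixed $w\in A\cup B$ (which overlap in at most the single edge $u^*w$) then gives $d(w)\le m-\Delta(G)+1\le (m+k)/2+1$, and hence the uniform gap $q-d(w)>\tfrac12(m-3k-2)=:Q_1>0$.

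The heart of the proof is a two-step expansion of \eqref{eq-1} at $u_0$. I would split $N(u_0)$ into $N(u_0)\cap N[u^*]$, which has at most two vertices by $d_A(u_0)\le 1$, each contributing at most $x_{u^*}$, and the far part $S=N(u_0)\cap B$. For $v\in S$ all neighbours lie in $A\cup B$ (as $v\notin N(u^*)$), so \eqref{eq-1} at $v$ gives $x_v\le \frac{d(v)}{q-d(v)}\beta\le \frac{d(v)}{Q_1}\beta$. To bound the total weight I would invoke $C_4$-freeness a second time: no vertex $w\neq u_0$ is adjacent to two members of $S$, whence $\sum_{v\in S}(d(v)-1)\le n-1\le m$ and so $\sum_{v\in S}d(v)\le d(u_0)+m$. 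Substituting back gives $(q-d(u_0))\beta\le 2x_{u^*}+\frac{d(u_0)+m}{Q_1}\beta$; since $q-d(u_0)>Q_1\sim m/2$ while $R:=\frac{d(u_0)+m}{Q_1}$ stays bounded by an absolute constant (close to $3$), rearranging yields $\beta\le \frac{2x_{u^*}}{Q_1-R}=O\!\left(x_{u^*}/m\right)$, which is below $\tfrac12 x_{u^*}$ for $m\ge 7k+31$.

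The main obstacle is exactly that a vertex of $A\cup B$ may itself have large degree—up to roughly $(m+k)/2$—so the naive estimate $\sum_{v\in N(u_0)}x_v\le d(u_0)x_{u^*}$ is far too weak. Everything hinges on controlling the aggregate weight $\sum_{v\in S}x_v$ of the far neighbours; the $C_4$-free near-disjointness of the sets $N(v)\setminus\{u_0\}$ is what keeps $\sum_{v\in S}d(v)$ linear in $m$, so that after dividing by $Q_1\sim m/2$ this contribution remains a bounded multiple of $\beta$. Making the resulting constant beat $\tfrac12$ is where the hypotheses $m\ge 7k+31$ and $m\ge k^2+8(k+1)$—which keep $k$ small relative to $m$, so that $Q_1>0$ and the gap $q-d(u_0)$ stays of order $m$—are used.
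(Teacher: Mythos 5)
Your proof is correct, but the spectral half takes a genuinely different route from the paper's. The paper does not use $C_4$-freeness at all in that part: it sets $W=\{u\in V(G): x_u\ge \tfrac12 x_{u^*}\}$, shows via \eqref{eq-1} that every $u\in W$ has $d(u)\ge q/3$, hence $|W|\le 6$ from the handshake bound $\sum_{u\in W}d(u)\le 2m$; it then sharpens this to $d(u^*)\ge\tfrac23 q-\tfrac53$ and $d(u)\ge \tfrac12 q-\tfrac52$ for a hypothetical second vertex $u\in W$, and contradicts $d(u^*)+d(u)\le m+1$ using $q\ge m-k+\tfrac1{m^2}$ and $m\ge 7k+31$. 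You instead bound the maximum entry $\beta$ over $A\cup B$ directly, invoking $C_4$-freeness a second time (the punctured neighborhoods $N(v)\setminus\{u_0\}$, $v\in S$, are pairwise disjoint) together with connectivity from Claim \ref{clm-1} (giving $n-1\le m$) to keep $\sum_{v\in S}d(v)\le d(u_0)+m$, so the far contribution is a bounded multiple of $\beta$ and you conclude $\beta=O(x_{u^*}/m)$. Your estimates do close on the stated range: $Q_1=\tfrac12(m-3k-2)\ge 2k+\tfrac{29}{2}$, while $R\le \tfrac{22k+95}{4k+29}<\tfrac{11}{2}$ (so not uniformly ``close to $3$,'' but still far below $Q_1$), giving $\beta<\tfrac{2}{Q_1-R}x_{u^*}\le \tfrac29 x_{u^*}<\tfrac12 x_{u^*}$. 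One wording slip worth fixing: the uniform degree bound on $A\cup B$ should be derived from $d(u^*)\ge q/2$ and $d(u^*)+d(w)\le m+1$ rather than from $\Delta(G)$ (the maximum-degree vertex could be $w$ itself); your edge-count parenthetical in fact does exactly this, so nothing breaks. Comparing the two approaches: the paper's counting argument is shorter and purely degree-based, whereas yours is heavier but delivers the much stronger estimate $x_u=O(x_{u^*}/m)$ on $A\cup B$, which would subsume the refined entry bounds the paper must re-derive later in Claims \ref{clm-10}--\ref{clm-12}.
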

\begin{proof}
For any vertex $u \in A \cup B$, 
since the graph $G$ is $C_4$-free, it immediately follows that $d_A(u) \leq 1$. 

Next, we prove the second part $x_u < \frac{1}{2}x_{u^*}$. 
First, let us define $W = \{ u \in V(G) \mid x_u \geq \frac{1}{2}x_{u^*} \}.$
Our goal is to show $x_u < \frac{1}{2}x_{u^*}$, 
which is equivalent to proving $|W| = 1$. 
Clearly, $|W| \geq 1$ since $u^* \in W$.

For any $u \in W$, it holds that $x_u \ge \frac{1}{2} x_{u^*}$.
From (\ref{eq-1}), we can obtain
$$(q-d(u)) \cdot \tfrac{1}{2} x_{u^*} \le (q - d(u)) x_u = \sum_{v \in N(u)} x_v \le d(u) x_{u^*},$$
which implies $d(u)\ge \frac{1}{3}q$.
Since $2m \ge \sum_{u \in W} d(u)$ and   $q \ge m - k + \frac{1}{m^2}$, we have 
\[ 2m \geq \tfrac{q}{3}|W| \geq \tfrac{1}{3}\left(m - k + \tfrac{1}{m^2}\right)|W|, \]
which implies $|W| \le \frac{6m}{m - k + \frac{1}{m^2}} < 7$. Notice that  and $m \ge 7k + 31$. Therefore, $|W| \le 6$.

By applying (\ref{eq-1}) to  the vertex $u^*$, we have
\begin{equation}
\begin{aligned}
(q - d(u^*)) x_{u^*}&
= \sum_{u \in A \cap W} x_u + \sum_{u \in A \backslash W} x_u \\
&\le (|W| - 1) x_{u^*} + (d(u^*) - |W| + 1) \cdot \tfrac{1}{2} x_{u^*}\\
& = \tfrac{1}{2} (d(u^*) + |W| - 1) x_{u^*},\notag
\end{aligned}
\end{equation}
which implies
$
d(u^*) \geq \tfrac{2}{3}q - \tfrac{1}{3}|W| + \tfrac{1}{3} \geq \tfrac{2}{3}q - \tfrac{5}{3}.
$
For a vertex $u \in W \backslash \{u^*\}$, applying (\ref{eq-1}) again, we have 
\[(q - d(u)) \cdot \tfrac{1}{2} x_{u^*} \le (q - d(u)) x_u \le \tfrac{1}{2} (d(u) + |W| - 1) x_{u^*},\]
which yields $d(u) \ge \frac{1}{2} q - \frac{5}{2}$.
Combining these with $q \geq m - k + \tfrac{1}{m^2}$, we have
\[ m+1 \geq d(u^*) + d(u) \geq \tfrac{7}{6}q - \tfrac{25}{6} \geq \tfrac{7}{6}\left(m - k + \tfrac{1}{m^2}\right) - \tfrac{25}{6}. \]
This simplifies to
$ m^3 - (7k + 31)m^2 + 7 > 0$.
But for $m \geq 7k + 31$, the left side is positive, yielding a contradiction. Thus $|W| = 1$.
\end{proof}

Claim \ref{clm-6} have established bounds on vertex degrees and eigenvector entries in $A\cup B$.
Set
\[
\Sigma_1 = \sum_{u \in A} d(u)x_u, \ \ 
\Sigma_2 = \sum_{uv \in E(A)} (x_u + x_v) \ \
\mbox{and}\ \
\Sigma_3 = \sum_{w \in B} d_A(w)x_w.
\]
We now turn to estimating the total eigenvector weight of set $A$.
\begin{clm}\label{clm:degree_bound}
If $|A| \leq m-k-2$, then
$
|A|\sum_{u\in A}x_u \leq mx_{u^*}.
$
\end{clm}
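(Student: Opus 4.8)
The plan is to turn the statement into a scalar spectral inequality and then bound $q$ from above using the local structure around $u^*$. First I would apply the eigenvalue equation (\ref{eq-1}) at $u^*$. Since $A=N(u^*)$ we have $d(u^*)=|A|$, so (\ref{eq-1}) yields the exact identity $\sum_{u\in A}x_u=(q-|A|)x_{u^*}$. Dividing the desired conclusion by $x_{u^*}>0$, the claim $|A|\sum_{u\in A}x_u\le m\,x_{u^*}$ is therefore equivalent to the purely numerical inequality $|A|(q-|A|)\le m$, i.e. to the upper bound $q\le |A|+\tfrac{m}{|A|}$. Everything thus reduces to estimating $q$ in terms of $|A|$ and $m$.

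For this estimate I would use Lemma \ref{bound}. Evaluating the controlling quantity at $u^*$ gives the value $|A|+\frac{1}{|A|}\sum_{v\in A}d(v)$, so I first need to understand $\sum_{v\in A}d(v)$. Sorting the edges meeting $A$ according to whether the other endpoint is $u^*$, lies in $A$, or lies in $B$ produces the identity $\sum_{v\in A}d(v)=m+e(A)-e(B)$, where $e(A)$ and $e(B)$ count the edges inside $A$ and inside $B$. Because $G$ is $C_4$-free, Claim \ref{clm-6} gives $d_A(u)\le 1$ for all $u\in A$, so $A$ spans a matching and $e(A)\le \lfloor |A|/2\rfloor$. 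Combined with the lower bound $|A|>\tfrac{2}{3}q>\tfrac{2}{3}(m-k)$ extracted in Claim \ref{clm-6}, this forces the average degree $\frac{1}{|A|}\sum_{v\in A}d(v)\le \frac{m}{|A|}+\tfrac12$ of the neighbours of $u^*$ to be below $3$; hence all but a bounded number of vertices of $A$ are pendant, and for a pendant $u\in A_1$ equation (\ref{eq-1}) gives the sharp value $x_u=\frac{x_{u^*}}{q-1}$.

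The decisive step, and the main obstacle, is to pass from $\sum_{v\in A}d(v)=m+e(A)-e(B)$ to the sharp bound $|A|(q-|A|)\le m$: this requires the excess $e(A)-e(B)$ to be nonpositive, together with the verification that the maximum in Lemma \ref{bound} is genuinely controlled by the value at $u^*$ rather than by some neighbour of intermediate degree. To handle the few high-degree vertices of $A$ --- for which Claim \ref{clm-6} only supplies the crude $x_u<\tfrac12 x_{u^*}$ --- I would refine (\ref{eq-1}): substituting $x_v<\tfrac12 x_{u^*}$ into the equation at a vertex $u\in A_2$ of degree $d(u)\ge 2$ yields $x_u<\frac{(d(u)+1)x_{u^*}}{2(q-d(u))}$, which is of order $x_{u^*}/q$ when $d(u)$ is small. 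The edge budget $m$ then limits both $|A_2|$ and the degrees occurring in $A_2$, since $C_4$-freeness makes the neighbourhoods of two would-be large-degree vertices of $A$ almost disjoint and hence too expensive. It is precisely here that the hypotheses $m\ge 7k+31$ and $q\ge q(S^+_{m,k+1})>m-k+\tfrac{1}{m^2}$ must be invoked, forcing $e(A)-e(B)\le 0$ and closing the estimate $q\le |A|+\tfrac{m}{|A|}$.
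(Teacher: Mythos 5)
Your opening reduction is correct and clean: since $d(u^*)=|A|$, equation (\ref{eq-1}) applied at $u^*$ gives $\sum_{u\in A}x_u=(q-|A|)x_{u^*}$, so the claim is indeed equivalent to $|A|(q-|A|)\le m$, and your identity $\sum_{v\in A}d(v)=|A|+2e(A)+e(A,B)=m+e(A)-e(B)$ is also right. The fatal problem is the tool you choose to finish with. Lemma \ref{bound} bounds $q$ by a maximum over \emph{all} vertices, so the best it can possibly yield is its value at $u^*$, namely
\[
q\;\le\;|A|+\frac{\sum_{v\in A}d(v)}{|A|}\;=\;|A|+\frac{m+e(A)-e(B)}{|A|},
\]
and this is at most $|A|+m/|A|$ only if $e(A)\le e(B)$. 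You yourself flag this as ``the decisive step,'' but you give no argument for it beyond asserting that the hypotheses force it. They do not, and cannot in any non-circular way: at this stage of the proof the only available facts are connectivity, Claim \ref{clm-6}, $C_4$- and $K_{1,m-k}$-freeness, $q\ge q(S^+_{m,k+1})$, and the assumption $|A|\le m-k-2$, and nothing there prevents all of the $m-|A|\ge k+2$ edges not incident to $u^*$ from lying inside $A$. Concretely, $S^+_{m,k+2}$ meets every structural constraint, has $|A|=m-k-2$, $e(A)=k+2$, $e(B)=0$, and for it the Feng--Yu value at $u^*$ is $|A|+\frac{m+k+2}{|A|}>|A|+\frac{m}{|A|}$; so the lemma is simply too lossy to deliver the stated bound, even though the bound itself is true for this graph (a computation with the quotient cubic, as in Lemma \ref{lem-2}, shows $|A|<q(S^+_{m,k+2})-? $ no: $q(S^+_{m,k+2})<|A|+1+\tfrac{k+2}{|A|}=|A|+m/|A|$). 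The conjunction of all hypotheses is in fact unsatisfiable, but that emptiness is exactly what Claim \ref{clm:A_size} later deduces \emph{from} the present claim, so it cannot be invoked here. A second, smaller gap: even granting $e(A)\le e(B)$, you never carry out the promised verification that no vertex other than $u^*$ attains a larger value of the Feng--Yu expression.

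The source of the loss is instructive, and it is precisely what the paper's argument is built to avoid: an edge inside $A$ is counted twice in $\sum_{v\in A}d(v)$, whereas its eigenvector contribution is only $x_u+x_v<x_{u^*}$ by Claim \ref{clm-6}. The paper therefore never passes to degrees. It sums (\ref{eq-1}) over $u\in A$, writes the total as $\Sigma_1+|A|x_{u^*}+\Sigma_2+\Sigma_3$, bounds each $\Sigma_i$ using $d_A(u)\le 1$ and $x_u<\tfrac12 x_{u^*}$, and obtains
\[
(q-2)\sum_{u\in A}x_u\;\le\;\bigl(|A|+e(A)+e(A,B)+e(B)\bigr)x_{u^*}\;=\;mx_{u^*},
\]
after which the conclusion follows from $|A|\le m-k-2<q-2$. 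If you wish to keep your reformulation $|A|(q-|A|)\le m$, you must replace Lemma \ref{bound} by an eigenvector-weighted count of this kind; the purely degree-based bound cannot be repaired.
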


\begin{proof}
We establish this bound through careful estimation of the eigenvector components. Beginning with the eigenvector equation for each $u \in A$:
\begin{equation*}
qx_u = d(u)x_u + x_{u^*} + \sum_{w \in N_A(u)} x_w + \sum_{w \in N_B(u)} x_w,
\end{equation*}
we sum over all vertices in $A$ to obtain the fundamental equality:
\begin{equation}\label{eq:main_identity}
\sum_{u \in A} qx_u = \Sigma_1 + |A|x_{u^*} + \Sigma_2 + \Sigma_3.
\end{equation}

Applying Claim~\ref{clm-6}, which gives $d_A(u) \leq 1$ and $x_u \leq \frac{1}{2}x_{u^*}$ for all $u \in A \cup B$, 
we derive the following bounds:
$\Sigma_1 \le \sum_{u \in A} (2 + d_B(u))x_u \leq 2\sum_{u \in A} x_u + \frac{1}{2}e(A,B)x_{u^*}$,
$\Sigma_2 = \sum_{uv \in E(A)} (x_u + x_v) \leq e(A)x_{u^*}$.

Partitioning $B = B_1 \cup B_2$, where $B_2$ contains vertices with $d_{B_2}(w) \geq 1$.
By Claim \ref{clm-6},  $x_w<\frac{1}{2}x_{u^*}$ for every vertex $w \in B_1$. 
It follows that
$\sum_{w\in B_1}d_A(w)x_w \leq \frac{1}{2}e(A, B_1)x_{u^*}$.
Notice that $d_A(w)\leq 1$ and $d_{B_2}(w)\geq1$ for every $w\in B_2$.
Thus $e(A, B_2)=\sum_{w\in B_2}d_A(w) \leq \sum_{w\in B_2}d_{B_2}(w)=2e(B_2)=2e(B)$,
and so $\sum_{w\in B_2}d_A(w)x_w \leq \frac{1}{2}e(A, B_2)x_{u^*} \leq e(B)x_{u^*}$.
Furthermore,
\[
\Sigma_3 =
\sum_{w \in B_1} d_A(w)x_w+\sum_{w \in B_2} d_A(w)x_w  \leq \Big(\tfrac{1}{2}e(A,B_1)+ e(B_2)\Big)x_{u^*}.
\]

Substituting these bounds into~\eqref{eq:main_identity} and rearranging terms yields:
\begin{equation*}
(q-2)\sum_{u \in A} x_u \leq \left(|A| + e(A)  + e(A,B)+e(B_2)\right)x_{u^*}.
\end{equation*}
Since $q > 2$ and $|A| \leq m-k-2$ while $q \geq m-k+\frac{1}{m^2}$, we have $|A|<q-2$. Therefore, we conclude
\begin{equation*}
|A|\sum_{u \in A} x_u \leq \left(|A| + e(A) + e(A,B) + e(B_2)\right)x_{u^*} = mx_{u^*},
\end{equation*}
where the equality follows from the edge count in the graph induced by $A \cup B$. This completes the proof.
\end{proof}

Notice that $qx_{u^*} = |A|x_{u^*} + \sum_{u \in A} x_u.$
Thus, we obtain 
\[q^2x_{u^*}= |A|qx_{u^*} + \sum_{u \in A}q x_u=|A|^2x_{u^*}+|A|\sum_{u \in A} x_u+\sum_{u \in A}q x_u.\]
On the other hand, we know that
\[\sum\limits_{u \in A} \sum\limits_{w \in N(u)} x_w = |A|x_{u^*} + \sum\limits_{uv \in E(A)} (x_u + x_v) + \sum\limits_{w \in B} d_A(w) x_w = |A|x_{u^*} + \Sigma_2 + \Sigma_3.\]
Therefore, we have
\[\sum_{u \in A} q x_u = \sum_{u \in A} \left( d(u) x_u + \sum_{w \in N(u)} x_w \right) = |A|x_{u^*} + \Sigma_1 + \Sigma_2 + \Sigma_3,\]
and so
\begin{equation}\label{eq-3}
q^2x_{u^*}=|A|^2x^*+|A|x_{u^*}+|A|\sum_{u\in A}x_u+\Sigma_1 + \Sigma_2 + \Sigma_3.
\end{equation}

\begin{clm}\label{clm:A_size}
The vertex set $A$ satisfies the exact cardinality relation $|A| = m - k - 1$.
\end{clm}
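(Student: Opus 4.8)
The plan is to establish $|A| = m-k-1$ by ruling out the only remaining possibility, $|A| \leq m-k-2$; indeed $|A| = d(u^*) \leq \Delta(G) \leq m-k-1$ already holds because $G$ is $K_{1,m-k}$-free. The natural starting point is the eigenvalue identity (\ref{eq-1}) at $u^*$, which reads $q x_{u^*} = |A|x_{u^*} + \sum_{u\in A} x_u$. Writing $S = \sum_{u\in A} x_u / x_{u^*}$, this is the clean relation $q = |A| + S$, so the whole claim reduces to squeezing $S$ between a good upper and a good lower bound.

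For the upper bound I would invoke Claim \ref{clm:degree_bound}: under the contradiction hypothesis $|A|\leq m-k-2$ it gives $|A|\sum_{u\in A} x_u \leq m x_{u^*}$, that is $|A|\cdot S \leq m$, and hence $q = |A| + S \leq |A| + m/|A|$. For the lower bound on $|A|$ I would use that Claim \ref{clm-6} has already proved $|W|=1$, i.e. $x_u < \tfrac12 x_{u^*}$ for every $u\neq u^*$; summing over $A$ yields $S < \tfrac12|A|$, so $q = |A| + S < \tfrac32 |A|$ and therefore $|A| > \tfrac23 q > \tfrac23(m-k)$, where $q > m-k+m^{-2}$ comes from Lemma \ref{lem-2} and the extremality of $G$.

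The final step is a monotonicity analysis of $g(t) = t + m/t$. Since $g'(t) = 1 - m/t^2 > 0$ for $t > \sqrt m$, and since $\tfrac23(m-k) > \sqrt m$ (which holds comfortably because $m-k \geq \tfrac67 m$ when $m\geq 7k+31$), the value $|A|$ lies in the increasing regime of $g$, so $q \leq g(|A|) \leq g(m-k-2)$. A direct simplification gives
\[ g(m-k-2) = (m-k-2) + \frac{m}{m-k-2} = (m-k-1) + \frac{k+2}{m-k-2} < m-k, \]
the last inequality being just $k+2 < m-k-2$, i.e. $m > 2k+4$, which is implied by $m\geq 7k+31$. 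Combining, $q \leq g(m-k-2) < m-k$, contradicting $q > m-k+m^{-2}$. This forces $|A| = m-k-1$.

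The step I expect to carry the real content, rather than routine algebra, is the lower bound $|A| > \tfrac23(m-k)$: it is exactly what places $|A|$ to the right of the minimum $\sqrt m$ of $g$, ensuring the bound $|A| + m/|A|$ is maximized at the endpoint $m-k-2$ and not at small $|A|$, where $g$ blows up toward $m+1$ and would yield no contradiction. Once Claims \ref{clm-6} and \ref{clm:degree_bound} are in hand, everything else is a short computation.
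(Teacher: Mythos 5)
Your proposal is correct, and it reaches the contradiction by a genuinely different algebraic route than the paper. The paper also assumes $|A|\le m-k-2$ and also feeds Claim \ref{clm:degree_bound} and the bound $q>m-k+\tfrac{1}{m^2}$ into the argument, but it works with the \emph{second-order} identity \eqref{eq-3}: it re-estimates $\Sigma_1,\Sigma_2,\Sigma_3$ via Claim \ref{clm-6} and edge counting to get $q^2x_{u^*}\le(|A|^2+3m)x_{u^*}$, and then the comparison $\left(m-k+\tfrac{1}{m^2}\right)^2\le (m-k-2)^2+3m$ fails for $m\ge 7k+31$. You instead stay entirely at the linear level $q=|A|+S$, bound $S\le m/|A|$ by Claim \ref{clm:degree_bound}, and close the argument with the monotonicity of $g(t)=t+m/t$, so you never need \eqref{eq-3} or the $\Sigma_i$ estimates. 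The price of your simplification is exactly the step you flag: since $g$ blows up to $m+1$ as $|A|\to 1$, you must first prove $|A|>\tfrac23 q>\sqrt m$ (via $S<\tfrac12|A|$ from Claim \ref{clm-6}) to know that $g(|A|)\le g(m-k-2)<m-k$; the paper's quadratic bound $|A|^2+3m$ is monotone increasing in $|A|$ over the whole range, so squaring buys the paper uniformity over all $|A|\le m-k-2$ and it never needs a lower bound on $|A|$. Both arguments are valid under the paper's hypotheses; yours is arguably cleaner and gives the sharper conclusion $q<m-k$ directly, at the cost of one extra (but easy) structural input.
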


\begin{proof}
We establish the equality by proving matching upper and lower bounds.
Since $G$ is $K_{1,m-k}$-free, we have
\begin{equation}\label{eq:upper_bound}
|A| \leq \Delta(G)\leq m - k - 1.
\end{equation}

Assume towards contradiction that $|A| \leq m - k - 2$. 
From Claim~\ref{clm-6}, we have
$x_u <\tfrac{1}{2}x_{u^*}$ and  $d_A(u)\leq 1$ for all $u \in A$.
Thus, we can bound each $\Sigma_i$ term as follows:
\begin{align*}
\Sigma_1&= \sum_{u \in A} d(u)x_u \leq \left(\tfrac{1}{2}|A| + e(A) + \tfrac{1}{2}e(A,B)\right)x_{u^*},\\
\Sigma_2&=\sum_{uv \in E(A)} (x_u + x_v) < e(A)x_{u^*} \ \ \mbox{and}\ \ 
\Sigma_3=\sum_{w \in B} d_A(w)x_w \leq \tfrac{1}{2}e(A,B)x_{u^*}.
\end{align*}
Combining these bounds yields the key inequality:
\[
\Sigma_1 + \Sigma_2 + \Sigma_3 \leq \left(\tfrac{1}{2}|A| + 2e(A) + e(A,B)\right)x_{u^*}.
\]
Substituting into the main equation~(\ref{eq-3}) produces:
\begin{align*}
q^2x_{u^*} &\leq \left(|A|^2 + m + \tfrac{3}{2}|A| + 2e(A) + e(A,B)\right)x_{u^*} \notag \\
&\leq (|A|^2 + 3m)x_{u^*}.
\end{align*}

Under our assumption $|A| \leq m - k - 2$ and the given lower bound $q > m - k + \tfrac{1}{m^2}$, we derive that
$
\Big(m - k + \tfrac{1}{m^2}\Big)^2 \leq (m - k - 2)^2 + 3m.
$
Simplification yields the polynomial inequality:
\begin{equation}\label{eq:final_poly}
m^5 - 4(k + 1)m^4 + 2m^3 - 2km^2 + 1 \leq 0.
\end{equation}

However, when $m \geq 7k + 31$, the left-hand side of~\eqref{eq:final_poly} is strictly positive, creating the required contradiction. Thus, 
the original assumption is false, and we conclude $|A| = m - k - 1$ by combining with~\eqref{eq:upper_bound}.
\end{proof}

Building upon our previous cardinality results and structural observations, 
we define $A_1=\{u\in A|d(u)=1\}$ and $A_2=A\setminus A_1$. We now establish the following result.
\begin{clm}\label{clm-8}
$|A_1| \geq 2$.
\end{clm}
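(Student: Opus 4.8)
The plan is to bound $|A_2|$ from above and then read off a lower bound on $|A_1|=|A|-|A_2|$. The crucial input is Claim~\ref{clm:A_size}, which gives $|A|=m-k-1$: this pins down $d(u^*)=|A|=m-k-1$ and thereby forces $G$ to be extremely sparse away from $u^*$. Concretely, since $u^*$ is adjacent to every vertex of $A$ and to none of $B$, exactly $m-(m-k-1)=k+1$ edges of $G$ avoid $u^*$. All of these lie in the subgraph induced by $A\cup B$, so I would record the identity $e(A)+e(A,B)+e(B)=k+1$.

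Next I would translate a degree lower bound at the $A_2$-vertices into a constraint on these $k+1$ off-$u^*$ edges. By definition every $u\in A_2$ has $d(u)\ge 2$, hence possesses at least one neighbor other than $u^*$, i.e.\ $d_A(u)+d_B(u)\ge 1$. Summing over $A_2$ and then enlarging the index set to all of $A$, and counting each edge inside $A$ twice while each $A$--$B$ edge once, I obtain
\[|A_2|\le\sum_{u\in A_2}\bigl(d_A(u)+d_B(u)\bigr)\le\sum_{u\in A}\bigl(d_A(u)+d_B(u)\bigr)=2e(A)+e(A,B)\le 2(k+1),\]
where the final step uses $e(A)+e(A,B)+e(B)=k+1$ together with $e(A,B),e(B)\ge 0$. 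Thus $|A_2|\le 2k+2$.

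Combining this with $|A|=m-k-1$ yields $|A_1|=|A|-|A_2|\ge(m-k-1)-(2k+2)=m-3k-3$, which is at least $4k+28\ge 2$ by the hypothesis $m\ge 7k+31$; this is exactly the claimed $|A_1|\ge 2$. I expect no genuine obstacle here: the argument is a sparse edge-count, and the only care required is the bookkeeping that edges inside $A$ contribute twice to the degree sum while $A$--$B$ edges contribute once. The conceptual point is that fixing $|A|=m-k-1$ leaves only $k+1$ spare edges, far too few to lift the degree of more than $2(k+1)$ neighbors of $u^*$ above $1$. (The bound $d_A(u)\le 1$ from Claim~\ref{clm-6} is consistent with this count but is not actually needed for the inequality.)
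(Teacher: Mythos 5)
Your proof is correct and rests on exactly the same counting as the paper's: with $|A|=m-k-1$ fixed by Claim~\ref{clm:A_size}, only $k+1$ edges avoid $u^*$, each of which can raise the degree of at most two vertices of $A$ above $1$, so $|A_2|\le 2(k+1)$ and hence $|A_1|\ge m-3k-3\ge 2$. The only differences are cosmetic: the paper argues by contradiction (assuming $|A_1|\le 1$ and deriving $m\le 3k+4$) and invokes the bound $d_A(u)\le 1$ from Claim~\ref{clm-6}, whereas you run the count directly and correctly observe that $d_A(u)\le 1$ is not needed.
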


\begin{proof}
We proceed by contradiction. Suppose to the contrary that $|A_1| \leq 1$.
Since $|A| = m - k - 1$ by Claim \ref{clm:A_size}, we have
\[ |A_2| = |A| - |A_1| \geq m - k - 2. \]
From Claim \ref{clm-6}, each vertex $u \in A$ satisfies $d_A(u) \leq 1$. 
Thus the induced subgraph on $A_2$ is a matching union some isolated vertices, yielding:
\[ e(A) + e(A,B) \geq \tfrac{1}{2}(m - k - 2).\]
The total edge count satisfies:
$e(A) + e(A,B) + e(B) = k + 1$
which yields the inequality
$ e(A) \leq k + 1 $.
Combining these observations, we obtain
\[ \tfrac{1}{2}(m - k - 2) \leq k + 1. \]
Simplifying gives that
$ m \leq 3k + 4 $.
However, this contradicts our initial assumption that $m \geq \max\{7k + 31,  k^2 + 8(k +1)\}$. 
Therefore, we conclude that $|A_1| \geq 2$.
\end{proof}

\begin{clm}\label{clm-9}
For every vertex $u \in A_1$, we have $x_u = \frac{1}{q - 1} x_{u^*}$.
\end{clm}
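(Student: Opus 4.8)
The plan is to apply the signless Laplacian eigenvalue equation (\ref{eq-1}) directly to a vertex $u \in A_1$; the entire content of the claim is really the observation that such a vertex has $u^*$ as its unique neighbor. First I would recall that, by the definition $A_1 = \{u \in A \mid d(u) = 1\}$, every such $u$ satisfies $d(u) = 1$. Since $u \in A = N(u^*)$, the vertex $u$ is adjacent to $u^*$; combined with $d(u) = 1$, this forces $N(u) = \{u^*\}$, so that $u^*$ is the only neighbor of $u$ in $G$.

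Next I would substitute $d(u) = 1$ and $N(u) = \{u^*\}$ into the eigenvalue identity (\ref{eq-1}). The right-hand side $\sum_{v \in N(u)} x_v$ then collapses to the single term $x_{u^*}$, yielding $(q - 1)x_u = x_{u^*}$. Dividing through by $q - 1$ gives the stated formula $x_u = \frac{1}{q-1}x_{u^*}$.

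The only point requiring a word of justification is that division by $q - 1$ is legitimate, i.e. $q \neq 1$. This is immediate from the lower bound $q \geq q(S^+_{m,k+1}) > m - k + \frac{1}{m^2} > 2$ established via Lemma \ref{lem-2} (and already invoked as $q > 2$ in the proof of Claim \ref{clm:degree_bound}) together with the hypothesis $m \geq 7k + 31$. Beyond this, there is no genuine technical obstacle: the argument is a one-line consequence of the degree-one condition, and its value lies in pinning down the exact eigenvector weight of the pendant vertices attached to $u^*$, which will feed into the subsequent comparison of $G$ with the extremal graph $S^+_{m,k+1}$.
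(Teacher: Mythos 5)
Your proof is correct and follows exactly the same route as the paper: use $d(u)=1$ and $u\in N(u^*)$ to conclude $N(u)=\{u^*\}$, then substitute into the eigenvalue equation (\ref{eq-1}) to get $(q-1)x_u = x_{u^*}$. The extra remark justifying division by $q-1$ (via $q > 2$) is a harmless refinement the paper omits.
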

\begin{proof}
Recall that by definition of $A_1$, each vertex $u \in A_1$ has degree $d(u) = 1$. 
From the fundamental eigenvector equation (\ref{eq-1}),
\[ (q - d(u))x_u = \sum_{v \in N(u)} x_v. \]
Since $u \in A_1 \subseteq A = N(u^*)$, and crucially $d(u) = 1$, the unique neighbor of $u$ must be the maximal degree vertex $u^*$. 
Therefore,
\[ (q - 1)x_u =\sum_{v \in N(u)} x_v = x_{u^*}, \]
which gives that
$x_u = \frac{1}{q - 1} x_{u^*}$.
This completes the proof.
\end{proof}

\begin{clm}\label{clm-10}
For every vertex $u \in A\cup B$, we have $d(u) \leq k+2$ and 
$x_u \leq \frac{k+3}{2(q-k-2)}x_{u^*}$.
\end{clm}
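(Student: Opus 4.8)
The plan is to bound the degrees first and then translate the degree bound into the eigenvector-entry bound via the fundamental equation~\eqref{eq-1}. For the degree bound, I would argue by contradiction: suppose some vertex $u \in A \cup B$ satisfies $d(u) \geq k+3$. The key structural constraint is that $G$ is $C_4$-free, so by Claim~\ref{clm-6} every vertex in $A \cup B$ has $d_A(u) \leq 1$; moreover $u^*$ has the maximum degree $d(u^*) = |A| = m-k-1$ by Claim~\ref{clm:A_size}. Since the total number of edges not incident to $u^*$ is exactly $m - (m-k-1) = k+1$, the edges lying in $E(A) \cup E(A,B) \cup E(B)$ number exactly $k+1$. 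A vertex $u$ of degree $d(u)$ other than $u^*$ is incident to at most one edge of $A$ (to $u^*$ itself if $u \in A$) and all its remaining $d(u)-1$ incident edges must come from this pool of $k+1$ edges. Hence $d(u) - 1 \leq k+1$, giving $d(u) \leq k+2$, the desired contradiction.

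Once the degree bound $d(u) \leq k+2$ is in hand for all $u \in A \cup B$, I would derive the eigenvector bound from~\eqref{eq-1}. Writing the equation at $u$, we have $(q - d(u))x_u = \sum_{v \in N(u)} x_v$. The right-hand side is a sum of $d(u)$ eigenvector entries, each at most $x_{u^*}$; but I want the sharper constant $\tfrac{k+3}{2}$ rather than $d(u) \leq k+2$. The improvement comes from Claim~\ref{clm-6}: among the $d(u)$ neighbors of $u$, at most one can lie in $W = \{u^*\}$ (indeed at most one neighbor is $u^*$ itself), and every other neighbor $v \in A \cup B$ satisfies $x_v < \tfrac{1}{2}x_{u^*}$. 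Therefore
\[
\sum_{v \in N(u)} x_v \leq x_{u^*} + (d(u)-1)\cdot\tfrac{1}{2}x_{u^*} = \tfrac{1}{2}\bigl(d(u)+1\bigr)x_{u^*} \leq \tfrac{k+3}{2}x_{u^*},
\]
using $d(u) \leq k+2$. Combining with $(q-d(u))x_u = \sum_{v\in N(u)} x_v$ and the fact that $q - d(u) \geq q - k - 2 > 0$ (since $q \geq m-k+\tfrac{1}{m^2}$ and $m$ is large), we solve for $x_u$ to obtain
\[
x_u = \frac{\sum_{v\in N(u)}x_v}{q-d(u)} \leq \frac{(k+3)/2}{q-k-2}\,x_{u^*} = \frac{k+3}{2(q-k-2)}x_{u^*},
\]
which is exactly the claimed bound.

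I expect the main obstacle to be the degree-counting step rather than the eigenvector manipulation. The subtle point is correctly accounting for which edges incident to a high-degree vertex $u$ can be ``free'' (i.e.\ the edge $uu^*$ when $u \in A$) versus which must be drawn from the budget of $k+1$ non-star edges. One must handle the cases $u \in A$ and $u \in B$ separately: if $u \in A$ then $u$ is adjacent to $u^*$ and its other $d(u)-1$ edges lie among the $k+1$ non-star edges; if $u \in B$ then $u$ is not adjacent to $u^*$, but $d_A(u) \leq 1$ forces at most one edge into $A$ and the rest into $E(A,B)\cup E(B)$, all again counted among the $k+1$. Getting this bookkeeping exactly right, and verifying that the resulting bound $d(u) \leq k+2$ is uniform over both cases, is the crux; the passage to the eigenvector bound is then a direct substitution into~\eqref{eq-1} combined with the factor-of-$\tfrac12$ savings from Claim~\ref{clm-6}.
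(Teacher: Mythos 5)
Your proposal is correct and takes essentially the same approach as the paper: the degree bound is the same edge count against $d(u^*)=m-k-1$ from Claim~\ref{clm:A_size} (the paper packages it as $d(u)+d(u^*)\leq m+1$ via handshaking, you count the budget of $k+1$ edges not incident to $u^*$ --- the same computation, and your version even gives the slightly stronger $d(u)\leq k+1$ for $u\in B$). The eigenvector step is identical to the paper's: split off the at most one neighbor equal to $u^*$, bound the remaining neighbors by $\tfrac{1}{2}x_{u^*}$ via Claim~\ref{clm-6}, and divide by $q-d(u)\geq q-k-2>0$.
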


\begin{proof}
For any vertex $u \in A \cup B$, the handshaking lemma combined with the maximality of $d(u^*)$ gives
\begin{equation*}
d(u^*) + d(u) \leq m + 1.
\end{equation*}
By Claim \ref{clm:A_size}, we have $d(u^*)=m-k-1$, and thus
$
d(u) \leq m + 1 - (m - k-1) =  k + 2.
$
This establishes the degree bound.

From Claim~\ref{clm-6}, we recall that $x_u < \frac{1}{2}x_{u^*}$ for all $u \in A \cup B$. 
We bound the right-hand side by considering  any neighbor $v$ of $u$:
\begin{equation*}
\sum_{v \in N(u)} x_v \leq x_{u^*} + \tfrac{1}{2}(d(u) - 1)x_{u^*} = \tfrac{1}{2}(d(u) + 1)x_{u^*}.
\end{equation*}
By the eigenvector equation (\ref{eq-1}), we have
$
(q - d(u))x_u =\sum_{v \in N(u)} x_v\leq \tfrac{1}{2}(k + 3)x_{u^*}.
$
Using the inequality $q - d(u) \geq q - k - 2$  and solving for $x_u$ yields:
\begin{equation*}
x_u \leq \frac{k + 3}{2(q - k - 2)}x_{u^*}.
\end{equation*}
This completes the claim, establishing tight bounds on both the degrees and eigenvector entries for vertices in $A \cup B$.
\end{proof}

Let us partition $B$ into two subsets: 
$B_1 = \{ w \in B \mid d_B(w) = 0 \}$ and 
$B_2 = B \setminus B_1$.
Having established this partition, we will now systematically demonstrate that both $B_1$ and $B_2$ must necessarily be empty. To see this, we first analyze the properties of  $B_2$.


\begin{clm}\label{clm-11}
$B_2 = \emptyset$.
\end{clm}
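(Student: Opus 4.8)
The plan is to argue by contradiction against the maximality of $G$, using a single local edge–switch together with the Rayleigh identity $\mathbf{x}^{\top}Q(G)\mathbf{x}=\sum_{uv\in E(G)}(x_u+x_v)^2$. Assume $B_2\neq\emptyset$, so some edge $w_1w_2$ has both endpoints in $B$. First I would record the global edge budget: since $d(u^*)=m-k-1$ by Claim~\ref{clm:A_size}, the $m-(m-k-1)=k+1$ non-star edges satisfy $e(A)+e(A,B)+e(B)=k+1$. Every vertex $w\in B$ has all its neighbours in $A\cup B$ and degree at most $k+2$ (Claim~\ref{clm-10}); feeding the bound of Claim~\ref{clm-10} back into the eigenequation~\eqref{eq-1} gives
$$x_w\le\frac{d(w)}{q-d(w)}\cdot\frac{k+3}{2(q-k-2)}x_{u^*}\le\frac{(k+2)(k+3)}{2(q-k-2)^2}\,x_{u^*},$$
an entry of order $x_{u^*}/q^2$. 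By contrast Claim~\ref{clm-9} gives $x_a=\tfrac{1}{q-1}x_{u^*}$ for each $a\in A_1$, of order $x_{u^*}/q$, and Claim~\ref{clm-8} guarantees $|A_1|\ge2$; so the $A_1$-entries strictly dominate the $B$-entries.

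Next I would build the competitor $G'$ with a case split forced by connectivity. If both $w_1,w_2$ have degree at least $2$, set $G'=G-w_1w_2+a_1a_2$ with distinct $a_1,a_2\in A_1$. If one endpoint, say $w_1$, is a pendant (its only neighbour being $w_2$), then $w_2$ has degree at least $2$, since otherwise $\{w_1,w_2\}$ would be a $P_2$-component contradicting Claim~\ref{clm-1}; in this case set $G'=G-w_1w_2+w_1a_1$ with $a_1\in A_1$. In either case $G'$ has $m$ edges and no isolated vertex, its maximum degree is still $m-k-1$ (so $G'$ is $K_{1,m-k}$-free), and it is $C_4$-free: the unique new edge is incident to a degree-one vertex of $A_1$, so any $4$-cycle through it would require a second neighbour at that vertex which does not exist.

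Then I would run the spectral comparison. Since $\sum_{uv}(x_u+x_v)^2$ changes only through the deleted and added edges, $q(G')\ge q(G)+\big[(x_{a_1}+x_{a_2})^2-(x_{w_1}+x_{w_2})^2\big]/\mathbf{x}^{\top}\mathbf{x}$ in the first case, and analogously with $x_{w_1}+x_{a_1}$ against $x_{w_1}+x_{w_2}$ in the second. Both reduce to proving $2(q-k-2)^2>(k+2)(k+3)(q-1)$: in the first case this is $\tfrac{2}{q-1}>\tfrac{(k+2)(k+3)}{(q-k-2)^2}$ (comparing the two $A_1$-entries against the two $B$-entries), and in the second it is $x_{a_1}>x_{w_2}$, i.e. $\tfrac{1}{q-1}>\tfrac{(k+2)(k+3)}{2(q-k-2)^2}$. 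Using $m-k<q<m-k+1$ this becomes a polynomial inequality in $m$ that holds once $m\ge k^2+8(k+1)$; the quadratic-in-$k$ threshold is exactly what lets the order-$1/q$ weights of $A_1$ beat the order-$1/q^2$ weights of $B$. Hence $q(G')>q(G)$, contradicting maximality, so $B_2=\emptyset$.

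The hard part is not the spectral inequality itself but the bookkeeping that keeps $G'$ admissible: deleting $w_1w_2$ can strand a pendant endpoint, which is why the switch must sometimes reattach that endpoint to $A_1$ rather than introduce a fresh $A_1$-edge, and one must check in each subcase that no $4$-cycle is created and no vertex is isolated. The secondary delicacy is making the entry comparison quantitative for all admissible $k$, which is precisely where the stronger bound $m\ge k^2+8(k+1)$ (rather than merely $7k+31$) is needed.
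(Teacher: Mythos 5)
Your proposal is correct and takes essentially the same route as the paper: the same switch (delete the $B_2$-edge $w_1w_2$, insert an edge on pendant vertices of $A_1$), the same $O(x_{u^*}/q^2)$ bound on the entries of $B$ obtained by feeding Claim~\ref{clm-10} back into the eigenequation~\eqref{eq-1}, and the same Rayleigh-quotient comparison against the $q$-maximality of $G$ (your constant $(k+2)(k+3)$ versus the paper's $(k+1)(k+3)$ is immaterial). The one genuine, if small, difference is your case split for a pendant endpoint $w_1$, reattaching it via $G-w_1w_2+w_1a_1$: this handles a detail the paper silently ignores, namely that its switch $G-\{w_1w_2\}+\{u_1u_2\}$ can isolate $w_1$; the paper's argument survives because an isolated vertex can simply be discarded without changing $q$ or the edge count, but your version is cleaner on this point.
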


\begin{proof}
Suppose for contradiction that $B_2 \neq \emptyset$. Then there exists an edge $w_1w_2 \in E(B_2)$. 
We next analyze the properties of vertices in $B_2$ to derive a contradiction.

Since $|A| = m - k - 1$, the edge count equation gives
$ e(A) + e(A,B) + e(B) = k + 1 $.
Recall that $G$ is connected.
This implies $d_B(w_1) \leq k$ for any $w_1 \in B_2$.
By Claim \ref{clm-10}, we have:
$ x_w \leq \frac{k + 3}{2(q - k - 2)} x_{u^*}$.
Therefore, the neighborhood contribution in $B$ is bounded by:
$$ \sum_{v \in N_B(w_1)} x_v \leq d_B(w_1) \cdot \frac{k + 3}{2(q - k - 2)} x_{u^*} \leq \frac{k(k + 3)}{2(q - k - 2)} x_{u^*}. $$
 
On the other hand, 
by Claim \ref{clm-6},  $d_A(w_1) \leq 1$, we have
$ \sum\limits_{v \in N_A(w_1)}\!\!\!x_v \leq \frac{k + 3}{2(q - k - 2)} x_{u^*} $.
Notice that $N(w_1)=N_A(w_1)\cup N_B(w_1)$.
Combining these bounds gives
\[ \sum_{v \in N(w_1)} x_v \leq \frac{(k + 1)(k + 3)}{2(q - k - 2)} x_{u^*}. \]
Therefore,
from the eigenvalue equation (\ref{eq-1}), we have
\[(q - d(w_1)) x_{w_1} = \sum_{v \in N(w_1)}\!\!\!x_v \leq \tfrac{(k + 1)(k + 3)}{2(q - k - 2)} x_{u^*}.\]
Combining Claim \ref{clm-10} ($d(w_1) \leq k + 2$),  it follows that
\[ x_{w_1} \leq \frac{(k + 1)(k + 3)}{2(q - k - 2)^2} x_{u^*}.\]
The same bound holds for $x_{w_2}$.
By Claim \ref{clm-8} and Claim \ref{clm-9}, there exist $u_1, u_2 \in A_1$ with:
\[ d(u_1) = d(u_2) = 1 \quad \text{and} \quad x_{u_1} = x_{u_2} = \tfrac{1}{q-1}x_{u^*}. \]

Consider the graph $G' = G - \{w_1w_2\} + \{u_1u_2\}$, which maintains size $m$ and avoids both $K_{1,m-k}$ and $C_4$. The spectral change is:
\begin{align*}
q(G') - q(G) &\geq (x_{u_1} + x_{u_2})^2 - (x_{w_1} + x_{w_2})^2 \\
&\geq \tfrac{4}{(q-1)^2}x_{u^*}^2 - \tfrac{(k+1)^2(k+3)^2}{(q-k-2)^4}x_{u^*}^2 \\
&= \left(\tfrac{4}{(q-1)^2} - \tfrac{(k+1)^2(k+3)^2}{(q-k-2)^4}\right)x_{u^*}^2 > 0,
\end{align*}
when $m \geq \tfrac{1}{2}k^2 + 5k + \tfrac{11}{2}$. This contradicts the maximality of $q(G)$, proving $B_2 = \emptyset$.
\end{proof}

\begin{clm}\label{clm-12}
$B = \emptyset$.
\end{clm}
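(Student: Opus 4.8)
The plan is to argue by contradiction through an edge–switching operation, in the same spirit as Claim~\ref{clm-11}, but now eliminating the \emph{pendant} vertices of $B$. Suppose $B \neq \emptyset$ and fix $w \in B$. Since $B_2 = \emptyset$ by Claim~\ref{clm-11}, we have $d_B(w) = 0$; as $w \notin N[u^*]$ and $d_A(w) \leq 1$ by Claim~\ref{clm-6}, and $G$ has no isolated vertices, $w$ must be a pendant whose unique neighbour $v$ lies in $A$. Because $v \sim u^*$ and $v \sim w$ we have $d(v) \geq 2$, so $v \in A_2$, and applying~(\ref{eq-1}) to $w$ gives $x_w = \tfrac{1}{q-1}x_v$. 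The target of the switch will be two vertices $u_1, u_2 \in A_1$, which exist by Claim~\ref{clm-8} and satisfy $x_{u_1} = x_{u_2} = \tfrac{1}{q-1}x_{u^*}$ by Claim~\ref{clm-9}; note $u_1, u_2$ are leaves at $u^*$, hence non-adjacent, and distinct from $v \in A_2$.

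The decisive step is a sharp upper bound $x_v < \tfrac{2}{q}x_{u^*}$. To obtain it I would apply~(\ref{eq-1}) to $v$ and exploit the structure forced by $B_2 = \emptyset$: among the $d(v)-1 \leq k+1$ neighbours of $v$ other than $u^*$, at most one lies in $A$ (by $d_A(v)\le 1$ from Claim~\ref{clm-6}, of weight $< \tfrac{1}{2}x_{u^*}$), while every other such neighbour is a $B$-pendant of weight $\tfrac{1}{q-1}x_v$. Writing $t$ for the number of these pendant neighbours, this yields $\bigl(q - d(v) - \tfrac{t}{q-1}\bigr)x_v < \tfrac{3}{2}x_{u^*}$, and using $d(v)\le k+2$ (Claim~\ref{clm-10}), $t \le k+1$, together with $q \geq m-k+\tfrac{1}{m^2}$ and $m \ge 7k+31$, this rearranges to $x_v < \tfrac{2}{q}x_{u^*}$. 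Consequently $x_v + x_w = \tfrac{q}{q-1}x_v < \tfrac{2}{q-1}x_{u^*} = x_{u_1} + x_{u_2}$.

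Now form $G' = G - vw + u_1u_2$. Adding $u_1u_2$ creates only the triangle $u^*u_1u_2$ and no $C_4$, all degrees remain at most $m-k-1$ so no $K_{1,m-k}$ appears, and the vertex $w$ becomes isolated and may be deleted without changing the $Q$-index; thus $G'$ lies in our extremal class with $m$ edges. Taking the unit Perron vector $\mathbf{x}$ of $Q(G)$ as a test vector gives $q(G') \geq q(G) - (x_v + x_w)^2 + (x_{u_1} + x_{u_2})^2 > q(G)$ by the previous inequality, contradicting the maximality of $q(G)$. Hence $B = \emptyset$.

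I expect the main obstacle to be precisely the bound $x_v < \tfrac{2}{q}x_{u^*}$. The generic estimate of Claim~\ref{clm-10} only gives $x_v \leq \tfrac{k+3}{2(q-k-2)}x_{u^*}$, which for $k \geq 1$ is too weak to dominate $x_{u_1}+x_{u_2} = \tfrac{2}{q-1}x_{u^*}$, so a direct appeal to Claim~\ref{clm-10} would fail. The sharpening relies essentially on $B_2 = \emptyset$: it guarantees that the non-central neighbours of $v$ outside $A$ are all low-weight pendants, which prevents $v$ from simultaneously having large degree and large eigenvector weight. Care must also be taken that removing the isolated vertex $w$ only decreases the normalising factor, so the Rayleigh-quotient comparison remains valid after deletion.
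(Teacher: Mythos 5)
Your proof is correct and follows the same overall strategy as the paper's: argue by contradiction, observe that every $w \in B$ is a pendant hanging from some $v \in A_2$ (via Claims \ref{clm-6}, \ref{clm-1} and \ref{clm-11}), and perform the switch $G' = G - vw + u_1u_2$ with $u_1, u_2 \in A_1$ supplied by Claims \ref{clm-8} and \ref{clm-9}, comparing Rayleigh quotients of the signless Laplacian. Where you genuinely diverge is in the key estimate. The paper bounds $x_u + x_w$ (its $u$ is your $v$) by feeding the generic bounds of Claim \ref{clm-10} back into the eigenvalue equation, obtaining $x_u + x_w \leq \frac{q-k-2+(k+1)(k+3)}{(q-k-2)^2}\,x_{u^*}$, and then invokes the hypothesis $m \geq k^2 + 8(k+1)$ to beat $(x_{u_1}+x_{u_2})^2 = \frac{4}{(q-1)^2}x_{u^*}^2$. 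You instead exploit the structural fact that all $B$-neighbours of $v$ are pendants whose weight is exactly $\frac{1}{q-1}x_v$ (and likewise $x_w = \frac{1}{q-1}x_v$), which yields a self-referential inequality $\bigl(q - d(v) - \tfrac{t}{q-1}\bigr)x_v \leq \tfrac{3}{2}x_{u^*}$ and hence the sharper bound $x_v < \tfrac{2}{q}x_{u^*}$; this needs only $q > 4k+8+\tfrac{4(k+1)}{q-1}$, which holds comfortably since $q > m-k \geq 6k+31$, so your argument works already under $m \geq 7k+31$ and avoids the quadratic-in-$k$ size condition for this step. You also explicitly address a point the paper glosses over: after deleting $vw$ the vertex $w$ becomes isolated, so one must note that removing it does not decrease the $Q$-index (or that isolated vertices do not affect $q$) before invoking the maximality of $G$ over the class of graphs without isolated vertices; the paper's bare assertion that $G'$ stays in the extremal class silently skips this.
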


\begin{proof}
Assume for contradiction that $B \neq \emptyset$. By Claims~\ref{clm-11} and~\ref{clm-1}, we have:
$B = B_1 = \{ w \in B \mid d_B(w) = 0 \}$ and 
for every $w \in B$, there must exist $u \in A_2$ with $uw \in E(G)$. By Claim \ref{clm-8}, there exist 
$u_1 \neq u_2 \in A_1$ with $d(u_i)=1$ and $x_{u_i}=\frac{1}{q-1}x_{u^*}$.

For any $w \in B$ and its neighbor $u \in A_2$, we have
$x_w \leq \frac{(k+1)(k+3)}{2(q-k-2)^2}x_{u^*}$ and
$x_u \leq \frac{k+3}{2(q-k-2)}x_{u^*}$.
Below, we derive a new upper bound for $x_u$.
By equation (\ref{eq-1}), we obtain $$(q - d(u))x_u = \sum_{v \in N(u)} x_v \leq x_{u^*} + \frac{(d(u)-1)(k+3)}{2(q-k-2)}x_{u^*}.$$
From Claim \ref{clm-10}, we have $d(u) \leq k + 2$. Solving for $x_u$, we conclude  
$$x_u \leq \left(\frac{(k+1)(k+3)}{2(q-k-2)^2} + \frac{1}{q-k-2}\right)x_{u^*}.$$
Therefore, we have
\begin{equation}
x_w+x_u\leq\frac{q-k-2+(k+1)(k+3)}{(q-k-2)^2}x_{u^*}.\notag
\end{equation}
Consider the modified graph
$G' = G - \{ uw \} + \{ u_1u_2 \}.$
Clearly, $G'$ is a $\{K_{1,m-k}, C_4\}$-free graph of size $m$.
The singless Laplacian spectral change satisfies:
\begin{align*}
q(G') - q(G) 
&\geq (x_{u_1} + x_{u_2})^2 - (x_u + x_w)^2 \\
&= \left[ \frac{4}{(q-1)^2} - \left( \frac{q-k-2+(k+1)(k+3)}{(q-k-2)^2} \right)^2 \right] x_{u^*}^2 \\
&> 0 \quad (\text{since}\ \  m \geq k^2+8(k+1)).
\end{align*}
This contradicts the $q$-maximality of $G$, proving $B = \emptyset$.
\end{proof}
Synthesizing our preceding results, we have demonstrated that
the vertex set satisfies $|A| = m - k - 1$ and 
the set $B$ is empty ($B = \emptyset$).
From the fundamental edge-counting identity $m=|A|+ e(A)$, we immediately derive:
\[ e(A) = k + 1. \]

The $C_4$-free condition on $G$ imposes strong structural constraints. These constraints, combined with the established edge count, 
uniquely determine the graph's isomorphism class. We therefore conclude that $G$ must be isomorphic to the extremal graph $S_{m,k+1}^+$.

This completes the characterization of the maximal graph in our proof.
\end{proof}

\end{document}